\def\today{May 15, 2024}
\def\@evenfoot{\rule{0pt}{20pt}[\today] \hfill [{\tt \jobname.tex}]}
\def\@oddfoot{\rule{0pt}{20pt}{[\tt \jobname.tex}]\hfill [\today]}
\providecommand\@dotsep{5}
\def\listtodoname{List of Todos}
\def\listoftodos{\@starttoc{tdo}\listtodoname}
\newtheorem{theorem}{Theorem}
\newtheorem{corollary}[theorem]{Corollary}
\newtheorem{lemma}[theorem]{Lemma}
\newtheorem{proposition}[theorem]{Proposition}
\theoremstyle{definition}
\newtheorem{example}[theorem]{Example}
\newtheorem{remark}[theorem]{Remark}
\newtheorem{definition}[theorem]{Definition}
\def\Lin{{\it Lin}}
\def\cmm{{canonical minimal model}}
\def\bbk{{\mathbb k}}
\def\di{d}
\def\znamenko#1{{(-1)^{#1}}\cdot}
\def\oaa#1#2{{\overline{a}_{#1,#2}}}
\def\dga{{(A,\di,\cdot\,)}}
\def\pp{\pi}
\def\ps{\psi}
\def\bfD{{\mathbb D}}
\def\aa#1#2{a_{#1,#2}}
\def\Ainfty{{$A_\infty$}}
\def\isot{semi\-canonical}
\def\isot{canonical}
\def\ot{\otimes}
\def\Ainfty{\hbox{${A_\infty}$}}
\def\id{{\rm id}}
\def\C{(A,\di,\cdot\  )}
\def\bfmu{{\boldsymbol \mu}}
\def\bfnu{{\boldsymbol \nu}}
\def\Rada#1#2#3{{#1_{#2},\ldots,#1_{#3}}}
\def\<{\langle}
\def\>{\rangle}
\def\bfmu{{\boldsymbol \mu}}
\def\rada#1#2{#1,\ldots,#2}
\begin{document}

\baselineskip15pt 

\title[Massey products revisited]{Strong minimal model theorem and Massey products}

\author{Martin Markl}
\address{The Czech Academy of Sciences, Institute of Mathematics, {\v Z}itn{\'a} 25,
         115 67 Prague, Czech Republic}
\email{markl@math.cas.cz}
\thanks{Supported by Praemium Academi\ae\ and RVO: 67985840.}

\subjclass[2000]{16E99, 55S20}
\keywords{Minimal model, $A_\infty$-algebra, Massey product}

\begin{abstract}
Kadeishvili's minimal model theorem establishes
the existence of an
\Ainfty-structure,  unique up to isomorphism, 
on the cohomology of a dg
associative algebra, which captures its homotopy type. 
In this note we prove the existence of minimal models
that are unique up to isotopy, a stronger result
obviously known to T.~Kadeishvili
and certainly to others, yet seemingly overlooked by mankind.
We will explore how this stronger result can help in the study of Massey products.

First, we show that the attempts to extract a local information
from the ternary operation $\mu_3$ of our minimal model leads directly
to the rediscovery of the triple Massey product.  The motto~is:
\begin{center}
{\em
 The triple Massey product is an invariant manifestation of\/ $\mu_3$.
}
\end{center}
We then prove that, under reasonable
assumptions, the higher Massey product $\<\Rada x1n\>$ equals 
the set of all values $\mu_n(\Rada x1n)$, where $\mu_n$ runs over the 
$n$-ary products of our minimal models.  

We believe that this note
will help to elucidate 
the still somewhat enigmatic relationship between minimal models
and Massey products.
\end{abstract}

\maketitle

\tableofcontents

\section*{Introduction}

\lettrine{\color{red} T}{he}
standard minimal model theorem states that the homotopy type of a
differential graded associative algebra $A =(A,\di,\cdot)$ 
is captured by a suitable 
\Ainfty-structure $(H,0,\mu_2,\mu_3,\ldots)$ 
on its cohomology  $H = H(A,\di)$, and that this structure 
is unique up to an isomorphism. Although Tornike Kadeishvili in his
seminal paper~\cite{Kadei}, 
in which this theorem was formulated and proven, 
dropped the remark that the
minimal model he constructed  is, 
in today's terminology, unique up to an
isotopy, this important fact went largely unnoticed, 
although there were
exceptions, \hbox{cf.~\cite[Theorem~5]{pet}}.

The first aim of this note is to give, in Section~\ref{Tri dny je moje standardni prerioda.}, a simple and transparent proof
of this strong version of the minimal model theorem, based on the
approach of \cite{tr}. This unique-up-to-isotopy minimal model of a dg
algebra $(A,d,\cdot)$ with cohomology $H$
is given by a~connecting \Ainfty-morphism
\begin{equation}
\label{Prispel jsem na Kelimka.}
\psi_\infty = (\psi,\psi_2,\psi_3,\ldots) : (H,0,\mu_2,\mu_3,\ldots) 
\longrightarrow (A,\di,\cdot,0,0,\ldots)
\end{equation}
whose linear part $\psi: H \to A$ is
such that $\psi(h)$ is, for each $h\in H$, a
representative of the cohomology class $h$. For lack of a better
name, we will call $(H,0,\mu_2,\mu_3,\ldots)$  
the \cmm\ of~$A$.

In Section~\ref{Nastrikal jsem si dvirka na letadylko.} we pose the
question what can be said about the value $\mu_3(x,y,z)$ of the
trilinear product of a \cmm\ for concrete elements $x,y,z \in
H$. As the model is unique only up to an isotopy, the value
$\mu_3(x,y,z)$ is not defined unless the concrete \cmm\ whose part 
$\mu_3$ is has been specified. We will show that attempting to extract
a well-defined operation from $\mu_3(x,y,z)$ leads inevitably to the
recovery of the triple Massey product $\<x,y,z\>$ even if we have
never previously encountered it.

In Section~\ref{Zemrel Jiri Laufer co mi pripominal Jirku Chramostu.}
we restrict our attention to strictly defined Massey products.  Recall
that the $n$-ary product $\<\Rada x1n\>$ is strictly defined if
all lower Massey products of substrings of $\Rada x1n$ contain only
the value $0$.  We prove that, under a natural linear independence
assumption, the set $\<\Rada x1n\>$ is, modulo a specific sign, equal
to the set of values $\mu_n(\Rada x1n)$, with $\mu_n$'s running over
$n$-ary products of \cmm{s} of $A$, cf.~Theorem~\ref{Zitra jdu s
  Jarkou na CEZ.} and Corollary~\ref{Zaletam si v patek s
  letadylkem?}. As noted by P.~May 
in~\cite{Matrix}, citing: ``strictly defined matric Massey products
provide a more satisfactory generalization of triple products than do
arbitrary matric Massey products.''  We therefore consider the
restriction to strictly defined products in
Section~\ref{Zemrel Jiri Laufer co mi
  pripominal Jirku Chramostu.} to be fully justified.  
We do not think that the results of this part can be
meaningfully generalized to the general case.
\begin{center}
  -- -- -- -- -- 
\end{center}

The reason why any relationship between \cmm{s} and Massey products
exists at all is that the values of the components of the connecting
\Ainfty-morphism~(\ref{Prispel jsem na Kelimka.}) 
in cases of interest determine a defining system for a Massey
product in a very explicit way, 
given by the formulas of~Lemma~\ref{Krtecek mi pomaha.}. This lemma is
also the main technical result of this note.
 
So far we emphasized the (non)uniqueness of the canonical 
minimal  models, but have not
mentioned how to find them in practice. There is either the `classic'
inductive construction by T.~Kadeishvili~\cite{Kadei}, or an explicit 
closed formula given in~\cite{tr}, which depends on $\psi:H \to A$ 
as in~\eqref{Prispel jsem
  na Kelimka.}, on its right homotopy inverse $\pi$ and on a cochain
homotopy between the identity endomorphism $\id_A$ of $A$ 
and $\psi\pi$. Our existence proof
 in Section~\ref{Tri dny je moje standardni prerioda.} uses the latter 
approach. 

\vskip .5em
\noindent 
{\bf Signs.}
For the defining system of Massey
products, we use the standard sign convention 
and indeed the only one we know
of. The  trick of employing the sign reversal operator $a \mapsto
\znamenko a a$ is ingenious and nothing better could be invented. 

There are two natural sign conventions for \Ainfty-algebras 
and their morphisms, both based on a specific
translation between the \Ainfty-language and language of differential
graded tensor coalgebras~\cite[Example~II.3.90]{MSS}. These 
conventions are related by
overall signs, which typically look like $(-1)^{\frac{n(n-1)}2}$, where
$n$ is the arity of the corresponding operation. The dichotomy arises
from the fact that the $n$th tensor product \hbox{$\uparrow^{\ot n}$} of the
suspension operator is \underline{not} the inverse of the  $n$th tensor product 
\hbox{$\downarrow^{\ot n}$} of the
desuspension, but
\[
\uparrow^{\ot n}
\circ
\downarrow ^{\ot n}= \downarrow^{\ot n} \circ \uparrow^{\ot n}
= (-1)^{\frac{n(n-1)}2} \cdot \id.
\]
We choose the convention of~\cite{tr}.

Not surprisingly, the sign convention for Massey products is
not compatible with either of the two conventions for
\Ainfty-algebras, since they stem from different considerations.
Therefore the transmutation of the components of the connecting
\Ainfty-morphism~(\ref{Prispel jsem na Kelimka.}) into a defining system
given in Lemma~\ref{Krtecek mi pomaha.}
includes nontrivial sign factors. 
On the other hand, the choice of signs for
\Ainfty-objects is quite flexible, as explained in the first paragraph of page~141
in~\cite{tr}. We therefore believe that one can find 
a convention
so that there are no signs in the conversion
of~Lemma~\ref{Krtecek mi pomaha.}. However, we do not think that
there will be any use for such a convention except in the context of
this note.

\vskip .5em
\noindent 
{\bf Acknowledgment.} 
We are indebted to G.~Papayanov
who explained to us that the `strong minimal model theorem' was not our
invention, but was implicit in a 1980 paper~\cite{Kadei} by
T.~Kadeishvili.  
We would also like to thank Universidad de M\'alaga,
namely U.\ Buijs, J.M.\ Moreno-Fern\'andez and A.\ Murillo, for for
their gracious hospitality during the period when the first version of this note was
conceived.  Special appreciation is also due to J.M.\
Moreno-Fern\'andez for identifying several misprints in the MS
and to D.~Petersen who informed us that 
the first occurrence of the
  term "isotopy" in the context of this note  was probably 
the 2015 article~\cite{DSV}.

\vskip .5em
\noindent 
{\bf Conventions.} 
All algebraic objects in this note will be defined over a field
$\bbk$. By $\Lin^s(U,V)$ we denote the space of 
$\bbk$-linear maps $U \to V$ raising the degree by $s$.
The degree of a homogeneous element $x$ of a graded space $X$
will be denoted by $|x|$, although in expressions such as~$(-1)^{|x|}$ we
omit the vertical bars and write  $(-1)^{x}$ instead.
Writing $x \in X$ we assume that $x$ is homogeneous, and $\overline
x$ will denote $(-1)^x \cdot x$.

For \Ainfty-algebras and their morphisms we rely on the
sign conventions given
in~\cite[page 141]{tr}.
However, we will work in the cohomological scenario, so 
that all differentials are of 
degree~$+1$, as is traditional in the context of Massey products.  
The degrees of other objects should to be reversed accordingly. 
Cochain complexes, possibly unbounded, will be typically denoted by 
$(A,\di)$, $(B,\di)$,~\&c. We will use the
same symbol $\di$ for all  differentials, since they will always be
determined by their underlying spaces. 
We will denote by $[x]$ the equivalence class of $x$ in a quotient space.
In particular, $[a]$ will denote the
cohomology class of a cocycle $a$.
The abbreviation {\em dg algebra\/} stands for a differential graded
associative $\bbk$-algebra.  Dg algebras will be treated as
\Ainfty-algebras with all ternary and higher products trivial.

An \hbox{$A_\infty$-algebra} $A =
(A,\di,\mu_2,\mu_3,\ldots)$  will be often written as $A = (A,\di,\bfmu)$,
with $\bfmu$ serving as the collective symbol for the  higher products $\mu_k:
A^{\ot k } \to A$, $k \geq 2$.
An \Ainfty-morphism 
\
\[
\vartheta_\infty = (\vartheta,\vartheta_2,\vartheta_3,\ldots) : (A,\di,\mu_2,\mu_3,\ldots) \longrightarrow
(B,\di,\nu_2,\nu_3,\ldots),\ \vartheta : A \to B,\
\vartheta_k:A^{\ot k} \to B,\ k \geq 2,
\]
will be abbreviated accordingly by $\vartheta_\infty :  (A,\di,\bfmu) \to
(B,\di,\bfnu)$. Such an \Ainfty-morphism  is
an {\em isotopy \/} if its linear part
$\vartheta$ equals  the identity automorphism $\id_A$ of
$A$. 
Then of course $(B,\di) = (A,\di)$. 
We will call the \Ainfty-morphism $\vartheta_\infty$ above
an {\/\em extension\/} of the chain map $\vartheta: (A,\di) \to (B,\di)$.

\section{Strong minimal model theorem}
\label{Tri dny je moje standardni prerioda.}

\lettrine{\color{red} T}{his}
section brings back to life, in the form of the `strong
minimal model theorem,' a~remark dropped in the
last paragraph of page~236 in the classic article~\cite{Kadei} of 
Tornike~Kadeishvili. However, our proof of his result will be different
than the inductive Tornike's construction.

\begin{definition}
\label{Zatahne se jim to?}
Let $A = (A,d,\cdot)$ be a dg algebra  and
$H = H(A,\di)$ its cohomology. We call an \hbox{\Ainfty-structure}
$(H,0,\mu_2,\mu_3,\ldots)$ on $H$ the {\/\em canonical minimal
model\/ } of $A$ if there exists an associated {\em connecting \Ainfty-morphism}
\begin{equation}
\label{Vaham jestli mam jet ale asi nepojedu.}
\psi_\infty = (\psi,\psi_2,\psi_3,\ldots) : (H,0,\mu_2,\mu_3,\ldots) 
\longrightarrow (A,\di,\cdot,0,0,\ldots) 
\end{equation}
such that $\psi: H \to A$ sends each $x \in H(A,\di)$ to a 
representative of its
cohomology class, that is 
\begin{equation}
\label{Pojedeme s Jarkou do OBI.}
[\psi(h)] = h, \hbox { for each } \ h \in H(A,\di).
\end{equation}
We will call an operation $\mu_n : H^{\ot n} \to
H$ is a {\em \isot\ $n$-ary product \/} if it is the $n$th operation of some \cmm\  of
$\dga$.
\end{definition}
 
It follows from the definition of \Ainfty-morphisms and
property~(\ref{Pojedeme s Jarkou do OBI.}) of the map
$\psi : H \to A$ that
an operation \hbox{$\mu_2 : H^{\ot 2} \to
H$} is a canonical binary product if and only if it is the standard induced
multiplication on the cohomology. 
Canonicity is important here,
without this assumption, there may exist  $\mu_2$'s in 
(non-canonical) minimal models which differ from the 
induced~multiplication, as illustrated by  equation~\eqref{Dal jsem si
  do kavy vice slehacky nez obvykle.} in
Example~\ref{Mam predplatne Vesmiru.}.

\begin{theorem}[Strong minimal model theorem]
\label{Pujdeme s Jarkou k Mechacum.} 
Every dg associative algebra admits a canonical minimal model which is
unique up to isotopy.
\end{theorem}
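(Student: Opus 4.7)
The plan is to split the theorem into its existence and uniqueness parts and to use the homotopy transfer machinery of~\cite{tr} for both.

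For existence, I would begin by fixing auxiliary cochain-level data $(\psi,\pi,h)$: a section $\psi : H \to A$ of the projection $\ker\di \twoheadrightarrow H$ (so that $\psi(x)$ is a representative of $x$ for each $x \in H$), a left inverse $\pi : A \to H$ with $\pi \psi = \id_H$ and $\pi$ vanishing on $\di(A)$, and a cochain homotopy $h : A \to A$ of degree $-1$ satisfying $\di h + h \di = \id_A - \psi\pi$. Such data exists because we work over a field~$\bbk$. The closed formula of~\cite{tr} then produces $\mu_n : H^{\ot n} \to H$ and $\psi_n : H^{\ot n} \to A$ as sums indexed by planar rooted trees with binary internal vertices, leaves labelled by $\psi$, internal edges by $h$, internal vertices by the multiplication $\cdot\,$, with either $\pi$ or the identity at the root. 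The \Ainfty-relations for $\bfmu$ and the morphism equations for $\psi_\infty$ then follow from the tree combinatorics already established in~\cite{tr}, and condition~\eqref{Pojedeme s Jarkou do OBI.} is built into the choice of $\psi$, giving a canonical minimal model.

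For uniqueness, suppose $(H,0,\bfmu)$ and $(H,0,\bfmu')$ are two \cmm{s} of $A$ with connecting morphisms $\psi_\infty$ and $\psi'_\infty$. Because both $\psi$ and $\psi'$ send each $x \in H$ to a representative of $x$, the difference $\psi' - \psi$ lands in $\di(A)$. My plan is to construct the isotopy $\Theta_\infty = (\id_H,\Theta_2,\Theta_3,\ldots) : (H,0,\bfmu) \to (H,0,\bfmu')$ by induction on arity $n$, maintaining two coupled conditions: (i) the \Ainfty-morphism relation between $\bfmu$ and $\bfmu'$ holds through arity $n-1$, and (ii) the composite $\psi'_\infty \circ \Theta_{<n}$ agrees with $\psi_\infty$ through arity $n-1$, possibly after adjustment by a coboundary. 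The obstruction to extending to arity $n$ will be an element of a chain complex of the form $(\mathrm{Hom}(H^{\ot n},A),\partial)$ whose exactness is guaranteed precisely by condition (ii) together with the fact that $\psi' - \psi$ is already a coboundary at the linear level; applying the chosen cochain homotopy $h$ and then $\pi$ yields the desired map $\Theta_n : H^{\ot n} \to H$, while carefully chosen primitives propagate (ii) to the next level.

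The hard part will be this induction. It has two entangled demands that must be satisfied simultaneously: keeping the linear part of $\Theta_\infty$ strictly equal to $\id_H$ rather than merely a quasi-isomorphism, and using the cochain-level compatibility with the $\psi$'s to make the obstruction to the purely cohomological morphism equation visibly exact. The sign bookkeeping flagged in the introduction to the paper adds a further layer of arithmetic overhead, but the underlying combinatorial geometry is again governed by the tree-sum formalism of~\cite{tr}, so that no genuinely new combinatorial difficulty arises beyond what was already handled in the existence step.
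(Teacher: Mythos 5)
Your existence argument is the paper's: both fix contraction data $(\psi,\pi,h)$ over the field $\bbk$ and invoke the transfer theorem of~\cite{tr} (Theorem~5 there; the tree-sum formula you describe is exactly what it provides), the only cosmetic difference being the sign of your homotopy $h$.

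For uniqueness you take a genuinely different route. The paper notes that $\psi''-\psi'$ takes values in coboundaries, hence $\psi'$ and $\psi''$ are homotopic, and then simply cites Item~(i) of \cite[Proposition~6]{bifib} to produce the isotopy filling the square~\eqref{Snad desetkrat!} (up to \Ainfty-homotopy, per the Remark following the proof). You propose to build the isotopy by hand via an obstruction-theoretic induction coupled to cochain-level compatibility with the connecting morphisms; this is essentially what a proof of the cited proposition must do, so the approach is viable, but as written it is a plan rather than a proof, and two structural points need to be confronted before the induction closes. First, since both $(H,0,\bfmu)$ and $(H,0,\bfmu')$ have zero differential, the component $\Theta_n$ does not occur in the arity-$n$ morphism relation at all (it would enter only through terms involving $\mu'_1$ or $\mu_1$, which vanish); that relation is therefore a \emph{constraint} on the already-chosen $\Theta_2,\ldots,\Theta_{n-1}$ together with $\mu_n$ and $\mu'_n$, while $\Theta_n$ itself is pinned down by your condition~(ii) at arity $n$ (using $\pi\psi'=\id_H$, which holds because $\pi$ kills coboundaries). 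So the logic is not ``choose $\Theta_n$ to kill the arity-$n$ obstruction'' but ``deduce the arity-$n$ relation from condition~(ii) at lower arities, then define $\Theta_n$ from condition~(ii) at arity $n$.'' Second, condition~(ii) cannot be maintained as a mere equality-up-to-coboundary arity by arity: the coherent statement is that $\psi'_\infty\circ\Theta_\infty$ is \Ainfty-homotopic to $\psi_\infty$, and the components of that homotopy are additional inductive data that enter the obstruction at the next arity. With those two adjustments the induction can be completed; alternatively, composing $\psi_\infty$ with an \Ainfty-homotopy inverse of $\psi'_\infty$ gives the isotopy at once, since a chain map $(H,0)\to(H,0)$ inducing the identity on cohomology \emph{is} the identity.
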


The principal difference against the standard minimal model theorem as
formulated for instance in~\cite{Keller} and repeated at several places
since (cf.~e.g.\ ~\cite[Theorem~1.1]{BMM}), is the word ``isotopy'' instead of
 ``isomorphism.'' 

\begin{proof}[Proof of Theorem~\ref{Pujdeme s Jarkou k Mechacum.}]
Let us prove the existence first. 
As we work over a field, there clearly exists a cochain 
map \hbox{$\pp : (A,\di) \to (H,0)$} which, restricted to  ${\rm Ker}\
\di \subset A$, is the canonical
projection to~$H$,  together with  its left
homotopy inverse $\ps : (H,0) \to (A,\di)$ such that \hbox{$\pp\ps = 
\id_H$}. 
This brings us to the special situation of the celebrated 
Theorem~5 of~\cite{tr}, i.e.\ there exists a diagram
\[
\xymatrix@C=5em{
 *{\quad \quad  (A,\di) \ } \ar@(ul,dl)[]_{h}}
\hskip -3.8em
\xymatrix@C=5em{
\rule{3em}{0em}\rule{0em}{2em}  \ar@<0.2em>@/^.8em/[r]^\pi
& {(H,0)\, ,} \ar@<0.2ex>@/^.8em/[l]_\psi
}\
\psi\pi - \id_A = \di h + h \di.
\]
By Theorem~5  loc.cit., the chain map $\psi$ extends to an
\Ainfty-morphism 
$\psi_\infty:  (H,0,\bfmu) \to \C$, which is the required \cmm.

Let $\psi'_\infty  : (H,0,\bfmu')  \to (A,\di,\cdot)$, $\psi''_\infty  :
(H,0,\bfmu'')  \to (A,\di,\cdot)$ 
be two \cmm{s} of~$A$. By assumption, for each $x\in H$, the
difference $\psi''(x) -
\psi'(x)$ is a coboundary, 
so $\psi''$ and $\psi'$ are homotopic since we are working over a
field. We find ourselves in the situation of 
Item (ii) of~\hbox{\cite[Proposition~6]{bifib}} 
expressed by the diagram
\begin{equation}
\label{Snad desetkrat!}
\xymatrix@C=3em{
(H,0,\bfmu')   \ar[r]^{\psi'_\infty}     \ar@{-->}[d]_S & (A,\di,\cdot) \ar@{=}[d]
\\
(H,0,\bfmu'') \ar[r]^{\psi''_\infty}    & \,(A,\di,\cdot).
}
\end{equation}
Item (i) of this proposition produces the required isotopy $S :
(H,0,\bfmu') \to  (H,0,\bfmu'')$.
\end{proof}

\begin{remark}
It can be proved, using~\cite[Proposition~6]{bifib} again, that the
isotopy $S$ in~(\ref{Snad desetkrat!}) can be chosen so that the
diagram commutes up to an \Ainfty-homotopy.
\end{remark}

\begin{example}
\label{Mam predplatne Vesmiru.}
Let $A = (A,\cdot,0)$ be a dg algebra concentrated in degree $0$,
i.e.\ a traditional associative $\bbk$-algebra. Since 
the differential is absent, 
its cohomology $H(A,0)$ is canonically isomorphic
to~$A$. A general minimal model of $(A,\cdot,0)$  is of the form
\[
\psi: (A,*,0) \longrightarrow (A,\cdot,0),
\]
where $\psi: A \to A$ is an arbitrary automorphism and $*: A \ot A \to A$ the
associative multiplication
\begin{equation}
\label{Dal jsem si do kavy vice slehacky nez obvykle.}
x * y := \psi^{-1} \left(\psi(x) \cdot \psi(y)\right), \  x,y \in A.
\end{equation}

By definition, this minimal model is canonical if $\psi = \id_A$, in
which case $*$ equals the original multiplication of $A$. The
uniqueness part is trivial, since the only isotopy is the
identity automorphism. This simple example illustrates the
difference between the standard minimal model theorem and the strong
one -- while general minimal models of $(A,\cdot ,0)$ 
are parametrized by automorphisms  $\psi: A
\to A$, there is exactly one \cmm\  of $(A,\cdot ,0)$.
\end{example}

\section{Triple Massey products rediscovered}
\label{Nastrikal jsem si dvirka na letadylko.}

\lettrine{\color{red} S}{uppose}
that  want to
study the homotopy type of a dg algebra $(A,\di)$ by means of some
operations on its cohomology $H = (A,\di)$, but have never heard 
of Massey products. The aim of this section is to show how an
invisible hand leads 
us to rediscover them.  Since we are not
completely ignorant  we know
that $H$ carries an \Ainfty-structure $(H,0,\bfmu)$ which captures the homotopy
type of $(A,\di)$, so this {\/\em \Ainfty-model \/} is the obvious candidate. 
Thanks to the strong minimal model theorem, it
is unique up to an isotopy, so it has to be considered as belonging to
a, typically infinite-dimensional, vector space modulo
an action of an affine group. Moduli spaces of this
type were indeed studied in the early years of rational homotopy
theory, cf.~\cite{Kocour Felix} 
for example, but we want something more tractable. 
A truncation of the
\cmm\ is the obvious~choice.

The key to the success of this approach is to understand the indeterminacy of the
individual \Ainfty-products.  
Suppose therefore that we are given two \Ainfty-models 
$(H,0,\bfmu')$ and $(H,0,\bfmu'')$ related by an isotopy
\[
\tau_\infty = (\tau,\tau_2,\tau_3,\ldots)   
: (H,0,\mu'_2,\mu'_3,\ldots) \longrightarrow (H,0,\mu''_2,\mu''_3,\ldots),
\]
where $\tau = \id_H$. The axioms of 
\Ainfty-morphism~\cite[Section~2]{tr}, in our situation where the differentials are
trivial, lead
in arities $2$ and $3$ to
\begin{subequations} 
\begin{align}
\label{V sobotu jsem si zaletal 3 a pul hodiny na letadelku.}
\mu''_2(\tau,\tau) - \tau \mu'_2 &=  0, \ \hbox { and}
\\
\label{Jsem zvedav co mi napsal.}
\tau \mu'_3 - \mu''_3 (\tau,\tau,\tau)
&= - \mu''_2(\tau,\tau_2) + \mu''_2(\tau_2,\tau)
+ \tau_2(\mu'_2 , \id_H) - \tau_2(\id_H, \mu'_2).
\end{align}
\end{subequations}
Equation~(\ref{Jsem zvedav co mi napsal.}) 
is illustrated in Figure~\ref{Beata neco popletla a ja za to mam
  pykat.}; we found images of this type very helpful for understanding
\Ainfty-algebras and their morphisms.
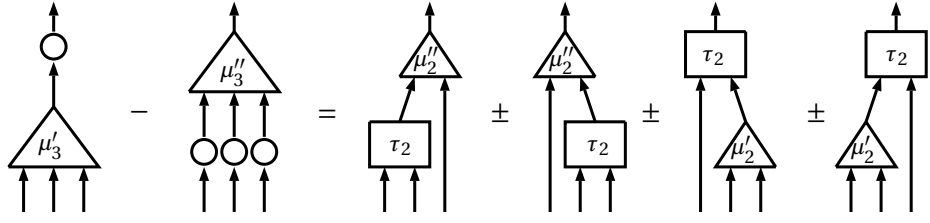
\begin{figure} 
\[
\psscalebox{1.0 1.0} 
{
\begin{pspicture}(0,-.8)(12.26,1.54792)
\psline[linecolor=black, linewidth=0.04](0.04000015,-0.55347914)(1.2400001,-0.55347914)(0.64000016,0.24652085)(0.04000015,-0.55347914)(0.04000015,-0.55347914)
\psline[linecolor=black, linewidth=0.04, arrowsize=0.05291667cm 2.0,arrowlength=1.4,arrowinset=0.0]{->}(0.64000016,0.24652085)(0.64000016,0.84652084)
\psline[linecolor=black, linewidth=0.04, arrowsize=0.05291667cm 2.0,arrowlength=1.4,arrowinset=0.0]{->}(0.24000016,-1.1534791)(0.24000016,-0.55347914)
\psline[linecolor=black, linewidth=0.04, arrowsize=0.05291667cm 2.0,arrowlength=1.4,arrowinset=0.0]{->}(0.64000016,-1.1534791)(0.64000016,-0.55347914)
\psline[linecolor=black, linewidth=0.04, arrowsize=0.05291667cm 2.0,arrowlength=1.4,arrowinset=0.0]{->}(1.0400002,-1.1534791)(1.0400002,-0.55347914)
\rput[bl](1.6400001,0.0652084){$-$}
\psline[linecolor=black, linewidth=0.04](2.44,0.44652084)(3.64,0.44652084)(3.0400002,1.2465209)(2.44,0.44652084)(2.44,0.44652084)
\psline[linecolor=black, linewidth=0.04, arrowsize=0.05291667cm 2.0,arrowlength=1.4,arrowinset=0.0]{->}(2.64,-0.15347916)(2.64,0.44652084)
\psline[linecolor=black, linewidth=0.04, arrowsize=0.05291667cm 2.0,arrowlength=1.4,arrowinset=0.0]{->}(3.0400002,-0.15347916)(3.0400002,0.44652084)
\psline[linecolor=black, linewidth=0.04, arrowsize=0.05291667cm 2.0,arrowlength=1.4,arrowinset=0.0]{->}(3.44,-0.15347916)(3.44,0.44652084)
\psline[linecolor=black, linewidth=0.04](4.84,0.046520844)(5.6400003,0.046520844)(5.6400003,-0.55347914)(4.84,-0.55347914)(4.84,0.046520844)(4.84,0.046520844)
\psline[linecolor=black, linewidth=0.04, arrowsize=0.05291667cm 2.0,arrowlength=1.4,arrowinset=0.0]{->}(5.04,-1.1534791)(5.04,-0.55347914)
\psline[linecolor=black, linewidth=0.04, arrowsize=0.05291667cm 2.0,arrowlength=1.4,arrowinset=0.0]{->}(5.44,-1.1534791)(5.44,-0.55347914)
\psline[linecolor=black, linewidth=0.04, arrowsize=0.05291667cm 2.0,arrowlength=1.4,arrowinset=0.0]{->}(5.84,-1.1534791)(5.84,0.64652085)
\psline[linecolor=black, linewidth=0.04, arrowsize=0.05291667cm 2.0,arrowlength=1.4,arrowinset=0.0]{->}(5.2400002,0.046520844)(5.44,0.64652085)
\psline[linecolor=black, linewidth=0.04](5.2400002,0.64652085)(6.04,0.64652085)(5.6400003,1.2465209)(5.2400002,0.64652085)
\psline[linecolor=black, linewidth=0.04, arrowsize=0.05291667cm 2.0,arrowlength=1.4,arrowinset=0.0]{->}(5.6400003,1.2465209)(5.6400003,1.6465209)
\psline[linecolor=black, linewidth=0.04](8.24,0.046520844)(7.44,0.046520844)(7.44,-0.55347914)(8.24,-0.55347914)(8.24,0.046520844)(8.24,0.046520844)
\psline[linecolor=black, linewidth=0.04, arrowsize=0.05291667cm 2.0,arrowlength=1.4,arrowinset=0.0]{->}(8.04,-1.1534791)(8.04,-0.55347914)
\psline[linecolor=black, linewidth=0.04, arrowsize=0.05291667cm 2.0,arrowlength=1.4,arrowinset=0.0]{->}(7.6400003,-1.1534791)(7.6400003,-0.55347914)
\psline[linecolor=black, linewidth=0.04, arrowsize=0.05291667cm 2.0,arrowlength=1.4,arrowinset=0.0]{->}(7.2400002,-1.1534791)(7.2400002,0.64652085)
\psline[linecolor=black, linewidth=0.04, arrowsize=0.05291667cm 2.0,arrowlength=1.4,arrowinset=0.0]{->}(7.84,0.046520844)(7.6400003,0.64652085)
\psline[linecolor=black, linewidth=0.04](7.84,0.64652085)(7.04,0.64652085)(7.44,1.2465209)(7.84,0.64652085)
\psline[linecolor=black, linewidth=0.04, arrowsize=0.05291667cm 2.0,arrowlength=1.4,arrowinset=0.0]{->}(7.44,1.2465209)(7.44,1.6465209)
\psline[linecolor=black, linewidth=0.04](9.84,1.2465209)(9.04,1.2465209)(9.04,0.64652085)(9.84,0.64652085)(9.84,1.2465209)(9.84,1.2465209)
\psline[linecolor=black, linewidth=0.04](10.24,-0.55347914)(9.440001,-0.55347914)(9.84,0.046520844)(10.24,-0.55347914)
\psline[linecolor=black, linewidth=0.04, arrowsize=0.05291667cm 2.0,arrowlength=1.4,arrowinset=0.0]{->}(9.84,0.046520844)(9.64,0.64652085)
\psline[linecolor=black, linewidth=0.04, arrowsize=0.05291667cm 2.0,arrowlength=1.4,arrowinset=0.0]{->}(9.64,-1.1534791)(9.64,-0.55347914)
\psline[linecolor=black, linewidth=0.04, arrowsize=0.05291667cm 2.0,arrowlength=1.4,arrowinset=0.0]{->}(10.04,-1.1534791)(10.04,-0.55347914)
\psline[linecolor=black, linewidth=0.04, arrowsize=0.05291667cm 2.0,arrowlength=1.4,arrowinset=0.0]{->}(9.24,-1.1534791)(9.24,0.64652085)
\psline[linecolor=black, linewidth=0.04, arrowsize=0.05291667cm 2.0,arrowlength=1.4,arrowinset=0.0]{->}(9.440001,1.2465209)(9.440001,1.6465209)
\psline[linecolor=black, linewidth=0.04](11.440001,1.2465209)(12.24,1.2465209)(12.24,0.64652085)(11.440001,0.64652085)(11.440001,1.2465209)(11.440001,1.2465209)
\psline[linecolor=black, linewidth=0.04](11.04,-0.55347914)(11.84,-0.55347914)(11.440001,0.046520844)(11.04,-0.55347914)
\psline[linecolor=black, linewidth=0.04, arrowsize=0.05291667cm 2.0,arrowlength=1.4,arrowinset=0.0]{->}(11.440001,0.046520844)(11.64,0.64652085)
\psline[linecolor=black, linewidth=0.04, arrowsize=0.05291667cm 2.0,arrowlength=1.4,arrowinset=0.0]{->}(11.64,-1.1534791)(11.64,-0.55347914)
\psline[linecolor=black, linewidth=0.04, arrowsize=0.05291667cm 2.0,arrowlength=1.4,arrowinset=0.0]{->}(11.24,-1.1534791)(11.24,-0.55347914)
\psline[linecolor=black, linewidth=0.04, arrowsize=0.05291667cm 2.0,arrowlength=1.4,arrowinset=0.0]{->}(12.04,-1.1534791)(12.04,0.64652085)
\psline[linecolor=black, linewidth=0.04, arrowsize=0.05291667cm 2.0,arrowlength=1.4,arrowinset=0.0]{->}(11.84,1.2465209)(11.84,1.6465209)
\pscircle[linecolor=black, linewidth=0.04, dimen=outer](2.64,-0.35347915){0.2}
\pscircle[linecolor=black, linewidth=0.04, dimen=outer](3.0400002,-0.35347915){0.2}
\pscircle[linecolor=black, linewidth=0.04, dimen=outer](0.64000016,1.0465208){0.2}
\pscircle[linecolor=black, linewidth=0.04, dimen=outer](3.44,-0.35347915){0.2}
\psline[linecolor=black, linewidth=0.04, arrowsize=0.05291667cm 2.0,arrowlength=1.4,arrowinset=0.0]{->}(3.44,-1.1534791)(3.44,-0.55347914)
\psline[linecolor=black, linewidth=0.04, arrowsize=0.05291667cm 2.0,arrowlength=1.4,arrowinset=0.0]{->}(3.0400002,-1.1534791)(3.0400002,-0.55347914)
\psline[linecolor=black, linewidth=0.04, arrowsize=0.05291667cm 2.0,arrowlength=1.4,arrowinset=0.0]{->}(2.64,-1.1534791)(2.64,-0.55347914)
\psline[linecolor=black, linewidth=0.04, arrowsize=0.05291667cm 2.0,arrowlength=1.4,arrowinset=0.0]{->}(3.0400002,1.2465209)(3.0400002,1.6465209)
\psline[linecolor=black, linewidth=0.04, arrowsize=0.05291667cm 2.0,arrowlength=1.4,arrowinset=0.0]{->}(0.64000016,1.2465209)(0.64000016,1.6465209)
\rput[bl](0.43,-0.45){\scriptsize$\mu_3'$}
\rput[bl](2.85,0.52084){\scriptsize$\mu_3''$}
\rput[bl](5.42,0.68){\scriptsize$\mu''_2$}
\rput[bl](5.10002,-0.4){\scriptsize$\tau_2$}
\rput[bl](7.22,0.68){\scriptsize$\mu''_2$}
\rput[bl](7.73,-0.4){\scriptsize$\tau_2$}
\rput[bl](9.26,.8){\scriptsize$\tau_2$}
\rput[bl](9.64,-0.5){\scriptsize$\mu'_2$}
\rput[bl](11.7,0.8){\scriptsize$\tau_2$}
\rput[bl](11.235,-0.5){\scriptsize$\mu'_2$}
\rput[bl](4.1400002,0.04652085){$=$}
\rput[bl](6.44,0.04652085){$\pm$}
\rput[bl](8.440001,0.046520844){$\pm$}
\rput[bl](10.64,0.046520844){$\pm$}
\end{pspicture}
}
\]
\caption{Flow diagram symbolizing equation~\eqref{Jsem zvedav co mi
    napsal.};   the white circles stand for~$\tau$.
\label{Beata neco popletla a ja za to mam pykat.}}  
\end{figure}

Equation~\eqref{V sobotu jsem si zaletal 3 a pul
  hodiny na letadelku.} with  $\tau =
\id_H$ shows that
$\mu'_2 = \mu''_2$, so $\mu_2$ is unique and 
is equal to the multiplication induced by the multiplication of the dg
algebra $A$. Thus~\eqref{Jsem zvedav co mi napsal.} with $\mu'_2 =
\mu''_2 =: \mu_2$ and   $\tau =
\id_H$~gives
\begin{equation}
\label{Pujdeme dnes k Pakousum?}
\mu'_3 - \mu''_3 
= - \mu_2(\id_H,\tau_2) + \mu_2(\tau_2,\id_H)
+ \tau_2(\mu_2 , \id_H) - \tau_2(\id_H, \mu_2).
\end{equation}
Equation~(\ref{Pujdeme dnes k Pakousum?}) 
implies that all canonical products
$\mu_3$'s determine a class in the quotient
\begin{equation}
\label{Budu si laminovat dvirka na podvozek.}
[\mu_3] \in \frac{\Lin^{-1}(H^{\ot 3},H)}{\Lin^{-1}(H^{\ot 2},H)},
\end{equation} 
where the elements $\tau_2$ of the affine group  $\Lin^{-1}(H^{\ot
  2},H)$ act on $\mu_3 \in \Lin^{-1}(H^{\ot 3},H)$ by the action
specified  by the right hand side of~(\ref{Pujdeme dnes k Pakousum?}).

We can do a little better, noticing that the right hand side
of~(\ref{Pujdeme dnes k Pakousum?}) describes 
the action of the Hochschild
differential  of the associative algebra $(H,\mu_2)$
on the $2$-cochain $\tau_2$. The \hbox{\Ainfty-axioms}
for $(H,0,\bfmu)$  imply
that all $\mu_3$'s must be Hochschild $3$-cocycles,  so~(\ref{Budu si
  laminovat dvirka na podvozek.}) can be improved~to
\[
[\mu_3] \in H_{\rm Hoch}^{3,-1}(H;H). 
\]
This invariant is called the {\/\em universal \/} Massey product,
cf.~the introduction of~\cite{Derived Muro} for the history of this
notion. {\em Nihil novi sub sole\/}, invariants of this
type have been studied in rational homotopy theory for a long time,
cf.~e.g.~\cite{HS}. 

Let us try to extract from the \Ainfty-model actual
`operations' acting on elements of $H$, at least in arities $2$ and
$3$. Since $\mu_2$ is unique, 
the value $\mu_2(x,y)$ is defined for arbitrary $x,y \in H$, 
and is equal to the induced
`cup product' $x \cdot y$. Let us inspect how $\mu_3(x,y,z)$ for given $x,y,z \in  H$ varies when $\mu_3$ moves through
the moduli space~(\ref{Budu si laminovat dvirka na podvozek.}).
To do this, we 
evaluate both sides of~\eqref{Pujdeme dnes k Pakousum?} at \hbox{$x \!\ot\! y\! \ot\!
z \in H^{\ot 3}$}, and denote for better readability 
the cup product $\mu_2$  by $\cdot$\ . The result is
\begin{equation}
\label{Pujdu si zabehat nebo se zblaznim.}
\mu''_3(x,y,z) - \mu'_3(x,y,z) =
-(-1)^x x \cdot \tau_2(y,z) + \tau_2(x,y)\cdot z + \tau_2(x\cdot y,z)  
-  \tau_2(x,y\cdot z)  .
\end{equation}

The indeterminacy of $\mu_3(x,y,z)$ 
is represented by the right hand side
of~\eqref{Pujdu si zabehat nebo se zblaznim.}. 
Since the values $\tau_2(x,y)$ and $\tau_2(y,z)$ may be practically
anything, the best we can say
about the first two terms is that they belong to the subspace 
\begin{equation}
\label{Jarka si nepretocila budika.}
x\cdot H^{y+z-1} + H^{x+y-1} \cdot z \subset
H^{x+y+z-1}.
\end{equation}
However, there is
nothing to be said a~priory about the remaining two terms in the
right hand side -- we have no control over $\tau_2(x\cdot
y,z)$ and $\tau_2(x, y\cdot z)$. The way how to save the day is to assume
that $x \cdot y =0$ and, likewise, that  $y \cdot z =0$.
Equation~\eqref{Pujdu si zabehat nebo se zblaznim.} then becomes
\begin{equation}
\label{Ulil jsem se z prednasky!}
\mu''_3(x,y,z) - \mu'_3(x,y,z) =
-(-1)^x x \cdot \tau_2(y,z) + \tau_2(x,y)\cdot z 
\end{equation}
illustrated in Figure~\ref{Beata neco popletla a ja za to mam pykat,
  sakra.}.
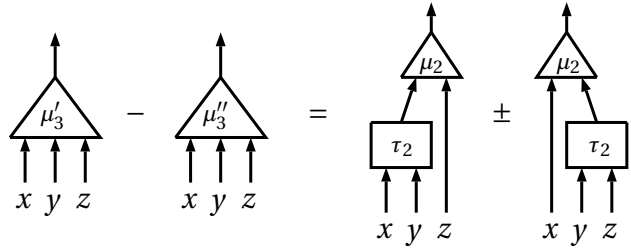
\begin{figure}
\[
\psscalebox{1.0 1.0} 
{
\begin{pspicture}(1.5,-1.15)(7.26,1.5534792)
\rput(0,.2){
\psline[linecolor=black, linewidth=0.04](0.04000015,-0.35347915)(1.2400001,-0.35347915)(0.64000016,0.44652084)(0.04000015,-0.35347915)(0.04000015,-0.35347915)
\psline[linecolor=black, linewidth=0.04, arrowsize=0.05291667cm 2.0,arrowlength=1.4,arrowinset=0.0]{->}(0.64000016,0.44652084)(0.64000016,1.0465208)
\psline[linecolor=black, linewidth=0.04, arrowsize=0.05291667cm 2.0,arrowlength=1.4,arrowinset=0.0]{->}(0.24000016,-0.9534792)(0.24000016,-0.35347915)
\psline[linecolor=black, linewidth=0.04, arrowsize=0.05291667cm 2.0,arrowlength=1.4,arrowinset=0.0]{->}(0.64000016,-0.9534792)(0.64000016,-0.35347915)
\psline[linecolor=black, linewidth=0.04, arrowsize=0.05291667cm 2.0,arrowlength=1.4,arrowinset=0.0]{->}(1.0400002,-0.9534792)(1.0400002,-0.35347915)
\psline[linecolor=black, linewidth=0.04](2.2400002,-0.35347915)(3.44,-0.35347915)(2.8400002,0.44652084)(2.2400002,-0.35347915)(2.2400002,-0.35347915)
\psline[linecolor=black, linewidth=0.04, arrowsize=0.05291667cm 2.0,arrowlength=1.4,arrowinset=0.0]{->}(2.44,-0.9534792)(2.44,-0.35347915)
\psline[linecolor=black, linewidth=0.04, arrowsize=0.05291667cm 2.0,arrowlength=1.4,arrowinset=0.0]{->}(2.8400002,-0.9534792)(2.8400002,-0.35347915)
\psline[linecolor=black, linewidth=0.04, arrowsize=0.05291667cm 2.0,arrowlength=1.4,arrowinset=0.0]{->}(3.2400002,-0.9534792)(3.2400002,-0.35347915)
\psline[linecolor=black, linewidth=0.04](4.84,-0.15347916)(5.6400003,-0.15347916)(5.6400003,-0.7534792)(4.84,-0.7534792)(4.84,-0.15347916)(4.84,-0.15347916)
\psline[linecolor=black, linewidth=0.04, arrowsize=0.05291667cm 2.0,arrowlength=1.4,arrowinset=0.0]{->}(5.04,-1.3534791)(5.04,-0.7534792)
\psline[linecolor=black, linewidth=0.04, arrowsize=0.05291667cm 2.0,arrowlength=1.4,arrowinset=0.0]{->}(5.44,-1.3534791)(5.44,-0.7534792)
\psline[linecolor=black, linewidth=0.04, arrowsize=0.05291667cm 2.0,arrowlength=1.4,arrowinset=0.0]{->}(5.84,-1.3534791)(5.84,0.44652084)
\psline[linecolor=black, linewidth=0.04, arrowsize=0.05291667cm 2.0,arrowlength=1.4,arrowinset=0.0]{->}(5.2400002,-0.15347916)(5.44,0.44652084)
\psline[linecolor=black, linewidth=0.04](5.2400002,0.44652084)(6.04,0.44652084)(5.6400003,1.0465208)(5.2400002,0.44652084)
\psline[linecolor=black, linewidth=0.04, arrowsize=0.05291667cm 2.0,arrowlength=1.4,arrowinset=0.0]{->}(5.6400003,1.0465208)(5.6400003,1.4465208)
\psline[linecolor=black, linewidth=0.04](8.24,-0.15347916)(7.44,-0.15347916)(7.44,-0.7534792)(8.24,-0.7534792)(8.24,-0.15347916)(8.24,-0.15347916)
\psline[linecolor=black, linewidth=0.04, arrowsize=0.05291667cm 2.0,arrowlength=1.4,arrowinset=0.0]{->}(8.04,-1.3534791)(8.04,-0.7534792)
\psline[linecolor=black, linewidth=0.04, arrowsize=0.05291667cm 2.0,arrowlength=1.4,arrowinset=0.0]{->}(7.6400003,-1.3534791)(7.6400003,-0.7534792)
\psline[linecolor=black, linewidth=0.04, arrowsize=0.05291667cm 2.0,arrowlength=1.4,arrowinset=0.0]{->}(7.2400002,-1.3534791)(7.2400002,0.44652084)
\psline[linecolor=black, linewidth=0.04, arrowsize=0.05291667cm 2.0,arrowlength=1.4,arrowinset=0.0]{->}(7.84,-0.15347916)(7.6400003,0.44652084)
\psline[linecolor=black, linewidth=0.04](7.84,0.44652084)(7.04,0.44652084)(7.44,1.0465208)(7.84,0.44652084)
\psline[linecolor=black, linewidth=0.04, arrowsize=0.05291667cm 2.0,arrowlength=1.4,arrowinset=0.0]{->}(7.44,1.0465208)(7.44,1.4465208)
\psline[linecolor=black, linewidth=0.04, arrowsize=0.05291667cm
2.0,arrowlength=1.4,arrowinset=0.0]{->}(2.8400002,0.44652084)(2.8400002,1.0465208)
}
\rput[bl](0.44,-.05){\scriptsize$\mu_3'$}
\rput[bl](2.6,-.05){\scriptsize$\mu_3''$}
\rput[bl](5.48,0.69){\scriptsize$\mu_2$}
\rput[bl](5.10002,-0.4){\scriptsize$\tau_2$}
\rput[bl](7.3,0.684){\scriptsize$\mu_2$}
\rput[bl](7.73,-0.4){\scriptsize$\tau_2$}
\rput(-11,0.4){
\rput[bl](11.1,-1.5){$x$}
\rput[bl](11.48,-1.6){$y$}
\rput[bl](11.9,-1.5){$z$}
}
\rput(-8.8,0.4){
\rput[bl](11.1,-1.5){$x$}
\rput[bl](11.48,-1.6){$y$}
\rput[bl](11.9,-1.5){$z$}
}
\rput(-6.2,0){
\rput[bl](11.1,-1.5){$x$}
\rput[bl](11.48,-1.6){$y$}
\rput[bl](11.9,-1.5){$z$}
}
\rput(-4,0){
\rput[bl](11.1,-1.5){$x$}
\rput[bl](11.48,-1.6){$y$}
\rput[bl](11.9,-1.5){$z$}
}
\rput[bl](1.57,0.04652085){$-$}
\rput[bl](4.0,0.04652085){$=$}
\rput[bl](6.44,0.04652085){$\pm$}
\end{pspicture}
}
\]
\caption{Flow diagram symbolizing equation~\eqref{Ulil jsem se z prednasky!}.   
\label{Beata neco popletla a ja za to mam pykat, sakra.}}  
\end{figure}
All canonical ternary products thus determine the equivalence class
\begin{equation}
\label{Vratila se zima.}
[\mu_3(x,y,z)] \in 
\frac {H^{x+y+z-1}}{\ \big(x\cdot H^{y+z-1} + H^{x+y-1} \cdot z\  \big
  )},\ \hbox { for $x,y,z \in H$ such that $x \cdot y=y \cdot z = 0$}.
\end{equation}
In the rest of this section we convince the reader that we have rediscovered
the triple Massey product.

The first indication is that both the triple Massey product $\<x,y,z\>$ and the
invariant in~\eqref{Vratila se zima.} are defined only for triples $x,y,z \in
H$ such that $x\cdot y = y \cdot z = 0$. If this condition is
satisfied, there exists a {\/\em
  defining system\/} 
\[
D = \{\aa uv \ | \ 1 \leq u \leq v \leq 3\}
\]
for $\<x,y,z\>$ which, by definition, consists of homogeneous elements of $A$ such
that $\aa 11,\aa 22,\aa 33$ are cocycles satisfying 
\begin{subequations}
\begin{equation}
\label{Udrzim se dnes?}
x = [\aa11],\ y = [\aa22],\ z = [\aa33],
\end{equation}
and $\aa12,\aa23$ are cochains such that 
\begin{equation}
\label{Dnes jsme nakupovali jako blaznivi.}
\di\aa12 = \oaa11 \aa23,\ \hbox { and }\  \di \aa23 = \oaa12 \aa33.
\end{equation}
\end{subequations}
The triple Massey product is then the subset $\<x,y,z\>  \subset H^{x+y+z-1}$ 
consisting of the cohomology classes of the cocycles 
\[
c(D) := \oaa 11 \aa 23  + \oaa 12\aa 33 \in A^{x+y+z-1},
\]
with $D$ running over all defining systems for $\<x,y,z\>$. Higher-order Massey
products are recalled in Section~\ref{Zemrel Jiri Laufer co mi
  pripominal Jirku Chramostu.} of this note.

It is well-known and could be easily checked that, 
given two defining systems $D'$ and $D''$ for $\<x,y,z\>$, the
cohomology class of the difference $c(D'')-c(D')$ belongs to  
the subset~(\ref{Jarka si nepretocila budika.}) which we have
already seen while discussing the indeterminacy of
$\mu_3(x,y,z)$.  
In other words,
\begin{equation}
\label{Tu dnesni lat jsem prosvihl, ale bude desna zima.}
b',b'' \in \<x,y,z\>  \ \Longrightarrow \   b''\!-b' \in x\cdot  H^{y+z-1} + H^{x+y-1} \cdot z.
\end{equation}
The right hand side of~(\ref{Tu dnesni lat jsem prosvihl, ale bude desna zima.}) imposes a constraint on
the form of the indeterminacy of $\<x,y,z\>$. 
In the ternary case, it actually {\/\em characterizes\/} the
indeterminacy of  $\<x,y,z\>$, as the following proposition shows.

\begin{proposition}
\label{Nejde mne internet, takze se nemusim trapit tim, jak je dnes krasne.}
Assume that the triple Massey product $\<x,y,z\>$ is defined, and $b
\in \<x,y,z\>$. Then $b + \xi \in \<x,y,z\>$ for each $\xi \in x\cdot
H^{y+z-1} + H^{x+y-1} \cdot z$. 
\end{proposition}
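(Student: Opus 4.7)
The plan is to prove $b + \xi \in \langle x,y,z\rangle$ by exhibiting an explicit defining system whose associated cocycle represents $b + \xi$. Fix a defining system $D = \{\aa 11,\aa 22,\aa 33,\aa 12,\aa 23\}$ with $b = [c(D)]$. Decompose $\xi = x \cdot h_1 + h_2 \cdot z$ with $h_1 \in H^{y+z-1}$ and $h_2 \in H^{x+y-1}$, and choose cocycles $\beta_1 \in A^{y+z-1}$ and $\beta_2 \in A^{x+y-1}$ representing $h_1$ and $h_2$, respectively.

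The key observation is that, since $\beta_1$ and $\beta_2$ are cocycles of the correct degrees, they can be added to $\aa 23$ and $\aa 12$ without disturbing the defining relations of $D$. I therefore form
\[
D' := \{\aa 11,\aa 22,\aa 33,\, \aa 12+\beta_2,\, \aa 23+\beta_1\},
\]
which is again a legitimate defining system for $\langle x,y,z\rangle$ because the differentials of the new off-diagonal entries coincide with those of $\aa 12$ and $\aa 23$. Expanding $c(D')$ and using linearity of the bar operator together with $\di\beta_1 = \di\beta_2 = 0$, a direct computation gives
\[
c(D') - c(D) = \oaa 11 \, \beta_1 + \overline{\beta_2}\, \aa 33,
\]
whose cohomology class equals $(-1)^x\, x \cdot h_1 + (-1)^{h_2}\, h_2 \cdot z$. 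Hence $[c(D')] = b + (-1)^x x \cdot h_1 + (-1)^{h_2} h_2 \cdot z \in \langle x,y,z\rangle$.

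Since $h_1, h_2$ range freely over $H^{y+z-1}$ and $H^{x+y-1}$, and the signs $(-1)^x$ and $(-1)^{h_2} = (-1)^{x+y-1}$ depend only on the fixed degrees of $x, h_2$, the substitutions $h_1 \leftrightarrow (-1)^x h_1$ and $h_2 \leftrightarrow (-1)^{h_2} h_2$ absorb these signs. Consequently every element of $b + x \cdot H^{y+z-1} + H^{x+y-1} \cdot z$ is realized as $[c(D')]$ for some defining system~$D'$, which is exactly the proposition.

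I do not expect a substantive obstacle here; the argument merely records the freedom already built into the notion of a defining system and translates it into the additive structure of the Massey product. The one delicate point worth flagging is that the modifiers must be cocycles rather than arbitrary cochains, so that the defining relations are preserved after the modification --- but this is automatic given that $h_1$ and $h_2$ are cohomology classes and hence admit cocycle representatives.
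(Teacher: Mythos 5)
Your proof is correct and follows essentially the same route as the paper: perturb the off-diagonal entries $\aa12$ and $\aa23$ of a given defining system by cocycle representatives of the chosen classes, note that the defining relations are undisturbed since the perturbations are closed, and absorb the resulting degree-dependent signs into the free choice of $h_1$ and $h_2$. The only cosmetic difference is that the paper builds the sign directly into the representative (choosing $[s_{1,2}]=\overline{\xi}_{1,2}$) rather than absorbing it at the end.
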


\begin{proof}
Let $D = \{\aa uv \ | \ 1 \leq u \leq v \leq 3\}$ be a defining
system~(\ref{Udrzim se dnes?})--\eqref{Dnes jsme nakupovali jako blaznivi.} 
such that $b$ is the cohomology class of $c(D)$, and
$\xi = x \cdot \xi_{2,3} +  \xi_{1,2} \cdot z$ for some  $ \xi_{1,2},  \xi_{2,3} \in
H$. Choose cocycles $s_{1,2}, s_{2,3} \in A$ such that 
$[s_{1,2}] = \overline\xi_{1,2}$ and $[s_{2,3}] = \xi_{2,3}$. Then
\[
D_\xi := \big\{ \
\aa11,\ \aa22,\ \aa33,\ \aa12 + s_{12},\ \aa23 + s_{23}\ \}
\]
is a defining system such that the cohomology class of $c(D_\xi)$ and
therefore also the associated Massey product 
is $b + \xi$.
\end{proof}

We conclude from~(\ref{Tu dnesni lat jsem prosvihl, ale bude desna
  zima.}) and Proposition~\ref{Nejde mne internet, takze se nemusim
  trapit tim, jak je dnes krasne.} 
that $\<x,y,z\>$ is an affine subspace of
$H^{x+y+z-1}$, which is a shift of the linear subspace $ x\cdot
H^{y+z-1} + H^{x+y-1} \cdot z$. 
The following elementary lemma characterizes 
affine subspaces of this form.

\begin{lemma}
\label{Bude termika ale strasna zima.}
Let $R \subset U$ be a given linear subspace of a vector space $U$. 
Then there is a natural one-to-one correspondence between
\begin{itemize}
\item[(i)]
points of the quotient $U/R$, and
\item [(ii)]
subsets $S \subset U$ with the property that (\,$b',b'' \in S
\Rightarrow b''\! -b' \in R$)\ \& \ (\,$b \in S, \xi \in R
\Rightarrow b+\xi \in S$).
\end{itemize}
This correspondence assigns to a subset $S$ in (ii) its image 
under the projection $U \twoheadrightarrow U/R$.
\end{lemma}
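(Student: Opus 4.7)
The plan is to identify the subsets $S$ appearing in item (ii) as precisely the (nonempty) cosets of $R$ in $U$; once this is done, the claimed bijection with $U/R$ is tautological.

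For the first direction, I would fix a nonempty $S$ satisfying the two properties of (ii) and choose any $b_0 \in S$. The first sub-condition applied with $b'=b_0$ and $b''$ ranging over $S$ gives $S - b_0 \subseteq R$, i.e., $S \subseteq b_0 + R$. The second sub-condition, applied to $b_0$ and arbitrary $\xi \in R$, gives the reverse inclusion $b_0 + R \subseteq S$. Hence $S = b_0 + R$ is a single coset of $R$. Conversely, every coset $b_0 + R$ plainly satisfies both clauses of (ii).

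To conclude, I would invoke the standard bijection between cosets of $R$ and points of $U/R$: sending a coset to the common image of its elements under the projection $U \twoheadrightarrow U/R$ is well-defined and a bijection by the very definition of the quotient space, and it matches the assignment described in the lemma. The only (very minor) obstacle is the pathological empty subset of $U$, which satisfies (ii) vacuously but has no counterpart in (i); this case does not occur in the applications of this note, since the set $\langle x,y,z\rangle$ is nonempty whenever the Massey product is defined, so it can be tacitly excluded.
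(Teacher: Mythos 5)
Your argument is correct and is the expected one: the paper offers no proof of this lemma, treating it as elementary, and identifying the sets in (ii) with the cosets of $R$ via the two inclusions $S \subseteq b_0 + R$ and $b_0 + R \subseteq S$ is exactly the intended reasoning. You are also right to flag the empty set, which satisfies (ii) vacuously but corresponds to no point of $U/R$; the lemma tacitly restricts to nonempty $S$ (as in its application, where $\<x,y,z\>$ is nonempty whenever defined), and your handling of this caveat is appropriate.
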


Applying Lemma~\ref{Bude termika ale strasna zima.} 
to $U = H^{x+y+z-1}$, $R = x\cdot H^{y+z-1} +
H^{x+y-1} \cdot z$ and $S = \<x,y,z\>$ we obtain

\begin{corollary}
In terms of the information provided, the triple Massey product
$\<x,y,z\>$
is equivalent to a single point, namely to the equivalence class
\begin{equation}
\label{Koupil jsem si tiramisu, je docela dobre.}
[\<x,y,z\>]
 \in 
\frac {H^{x+y+z-1}}{\ \big(x\cdot H^{y+z-1} + H^{x+y-1} \cdot z\  \big
  )},
\end{equation}
defined whenever $x \cdot y=y \cdot z = 0$.
\end{corollary}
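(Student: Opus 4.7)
The plan is to recognize that this corollary is a straightforward synthesis of the two preceding results, combining equation~\eqref{Tu dnesni lat jsem prosvihl, ale bude desna zima.}, Proposition~\ref{Nejde mne internet, takze se nemusim trapit tim, jak je dnes krasne.}, and Lemma~\ref{Bude termika ale strasna zima.}. The whole point of the setup was to show that $\<x,y,z\>$ has exactly the formal properties of a coset of the subspace $R := x\cdot H^{y+z-1} + H^{x+y-1}\cdot z$ inside $U := H^{x+y+z-1}$, so there is essentially no further work beyond quoting these ingredients.

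More concretely, first I would verify that the hypothesis $x\cdot y = y\cdot z = 0$ guarantees the existence of a defining system and hence that $\<x,y,z\>$ is a nonempty subset of $U$. Then I would check that the two bulleted conditions in item~(ii) of Lemma~\ref{Bude termika ale strasna zima.} are satisfied by $S := \<x,y,z\>$ with $R$ as above: the implication $b',b''\in S \Rightarrow b''-b'\in R$ is exactly the content of~\eqref{Tu dnesni lat jsem prosvihl, ale bude desna zima.}, which follows by comparing the cocycles $c(D')$ and $c(D'')$ associated to two defining systems, while the implication $b\in S,\ \xi\in R \Rightarrow b+\xi\in S$ is precisely Proposition~\ref{Nejde mne internet, takze se nemusim trapit tim, jak je dnes krasne.}, established there by perturbing the intermediate cochains $\aa12,\aa23$ by cocycle representatives of the two components of $\xi$.

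Having verified (ii), the lemma furnishes a well-defined element of $U/R$ associated to $S$, namely the common image of all elements of $\<x,y,z\>$ under the projection $U \twoheadrightarrow U/R$. This element is what we denote $[\<x,y,z\>]$, and by the one-to-one correspondence asserted by the lemma, no information is lost: from $[\<x,y,z\>]\in U/R$ one recovers $\<x,y,z\>$ as its preimage. This completes the identification of the triple Massey product with the single quotient element displayed in~\eqref{Koupil jsem si tiramisu, je docela dobre.}.

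I do not expect any genuine obstacle here; the content of the corollary is a tautological reformulation, and the only thing worth emphasizing in the write-up is that the quotient in~\eqref{Koupil jsem si tiramisu, je docela dobre.} coincides, via the same lemma, with the quotient in~\eqref{Vratila se zima.} parametrizing the values $\mu_3(x,y,z)$. This parallel is the real punchline of Section~\ref{Nastrikal jsem si dvirka na letadylko.} and makes precise the motto that the triple Massey product is the invariant manifestation of~$\mu_3$.
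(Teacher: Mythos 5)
Your proposal is correct and follows exactly the paper's route: the corollary is obtained by applying Lemma~\ref{Bude termika ale strasna zima.} with $U = H^{x+y+z-1}$, $R = x\cdot H^{y+z-1} + H^{x+y-1}\cdot z$ and $S = \<x,y,z\>$, where the two conditions in item~(ii) are supplied by~\eqref{Tu dnesni lat jsem prosvihl, ale bude desna zima.} and Proposition~\ref{Nejde mne internet, takze se nemusim trapit tim, jak je dnes krasne.} respectively. Your additional remark that $x\cdot y = y\cdot z = 0$ guarantees a defining system exists (so $S$ is nonempty) is a sensible explicit check that the paper leaves implicit.
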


It remains to prove that the equivalence class $[\<x,y,z\>]$ equals
$[\mu_3(x,y,z)]$ up to a specific sign.
To do so, consider a \cmm\ for $(A,\di,\cdot)$ as in~\eqref{Vaham jestli mam jet ale asi
  nepojedu.}. It will be established later
in the proof of Lemma~\ref{Krtecek mi pomaha.} 
in Section~\ref{Zemrel Jiri Laufer co mi
  pripominal Jirku Chramostu.}, that 
the associated connecting 
\Ainfty-morphism \hbox{$\psi : (H,0,\bfmu) \to (A,\di,\cdot)$} 
determines a defining system
$D_\psi$ for $\<x,y,z\>$ by
\begin{equation}
\label{Pak si tam jeste nalepim delfinka.}
\aa11 := \psi(x),\ \aa22 := \psi(y),\ \aa33 := \psi(z),\ 
\aa12 = -(-1)^x \psi_2(x,y),\ \aa 23 :=  -(-1)^y \psi_2(y,z)
\end{equation}
and that, moreover,
\begin{equation}
\label{Uz skaci prvni kumuly a ja jsem v Praze!}
c(D_\psi) =  (-1)^y \cdot  \mu_3(x,y,z).
\end{equation}

\begin{theorem}
\label{Strikam si dvirka podvozku.}
The equivalence classes~(\ref{Vratila se zima.}) and~(\ref{Koupil jsem si tiramisu, je docela dobre.}) are related by
$
[\<x,y,z\>] =   (-1)^y \cdot [\mu_3(x,y,z)].
$
\end{theorem}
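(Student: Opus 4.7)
The plan is to reduce the theorem to the content of Lemma~\ref{Krtecek mi pomaha.}, specifically equations~\eqref{Pak si tam jeste nalepim delfinka.} and~\eqref{Uz skaci prvni kumuly a ja jsem v Praze!}, combined with the purely set-theoretic characterization of $\<x,y,z\>$ given by Lemma~\ref{Bude termika ale strasna zima.}. The point is that, once we know one element of the Massey product $\<x,y,z\>$ produced by the canonical minimal model, the theorem follows essentially automatically.

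First I would fix a \cmm\ $(H,0,\bfmu)$ for $(A,\di,\cdot)$ together with its connecting \Ainfty-morphism $\psi_\infty = (\psi,\psi_2,\psi_3,\ldots)$ as in~\eqref{Vaham jestli mam jet ale asi nepojedu.}. Invoking the specialization of Lemma~\ref{Krtecek mi pomaha.} to $n=3$, namely the formulas~\eqref{Pak si tam jeste nalepim delfinka.}, I obtain that the prescription
\[
\aa11 := \psi(x),\ \aa22 := \psi(y),\ \aa33 := \psi(z),\
\aa12 := -(-1)^x \psi_2(x,y),\ \aa23 := -(-1)^y \psi_2(y,z)
\]
is a bona fide defining system $D_\psi$ for $\<x,y,z\>$, and that moreover its associated cocycle satisfies
\[
c(D_\psi) \;=\; (-1)^y \cdot \psi\bigl(\mu_3(x,y,z)\bigr)
\]
so that, passing to cohomology classes in $H$, the element $(-1)^y \cdot \mu_3(x,y,z)$ lies in $\<x,y,z\>$.

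Next I would invoke Lemma~\ref{Bude termika ale strasna zima.} applied to $U := H^{x+y+z-1}$, $R := x\cdot H^{y+z-1} + H^{x+y-1} \cdot z$ and $S := \<x,y,z\>$; that $S$ is of the type (ii) in the lemma is precisely the content of~\eqref{Tu dnesni lat jsem prosvihl, ale bude desna zima.} together with Proposition~\ref{Nejde mne internet, takze se nemusim trapit tim, jak je dnes krasne.}. The lemma then identifies $[\<x,y,z\>] \in U/R$ with the image under the projection $U \twoheadrightarrow U/R$ of \emph{any} element of $\<x,y,z\>$. Applying this to the particular element $(-1)^y \cdot \mu_3(x,y,z)$ produced in the previous step yields
\[
[\<x,y,z\>] \;=\; \bigl[(-1)^y \cdot \mu_3(x,y,z)\bigr] \;=\; (-1)^y \cdot [\mu_3(x,y,z)],
\]
which is the claim.

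The only nontrivial ingredient is the sign $(-1)^y$ in~\eqref{Uz skaci prvni kumuly a ja jsem v Praze!}, and this is precisely the technical hard work deferred to Lemma~\ref{Krtecek mi pomaha.}: it encodes the bookkeeping needed to reconcile the sign conventions for \Ainfty-algebras (as in~\cite{tr}) with the Massey-product sign conventions built around the bar $\overline{a} = (-1)^a a$. Once that lemma is in hand, the present theorem is a formal consequence, and everything else---including the well-definedness of the right-hand side as an element of the quotient, independent of the chosen \cmm---follows from~\eqref{Pujdu si zabehat nebo se zblaznim.} modulo the hypothesis $x\cdot y = y \cdot z = 0$.
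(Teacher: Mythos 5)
Your proposal is correct and follows essentially the same route as the paper: the paper's proof is precisely ``an immediate consequence of~\eqref{Uz skaci prvni kumuly a ja jsem v Praze!}'', which is the $n=3$ instance of Lemma~\ref{Krtecek mi pomaha.} exhibiting $(-1)^y\mu_3(x,y,z)$ as an element of $\<x,y,z\>$, combined with the identification of $[\<x,y,z\>]$ via Lemma~\ref{Bude termika ale strasna zima.} already set up in the surrounding text. You have merely made explicit the bookkeeping that the paper leaves implicit.
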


\begin{proof}
An immediate consequence of~(\ref{Uz skaci prvni kumuly a ja jsem v Praze!}).  
\end{proof}

The conceptual explanation why Theorem~\ref{Strikam si dvirka
  podvozku.} holds is a close relation between the defining systems
for triple Massey products and the second Taylor coefficient of 
the connecting \Ainfty-morphism~\eqref{Vaham jestli mam jet ale asi nepojedu.}, 
spelled out in~\eqref{Pak
  si tam jeste nalepim delfinka.}. We will see in the next
section that a similar relation holds for higher-order Massey products
as well.   
\begin{center}
  -- -- -- -- -- 
\end{center}

There are estimates for the indeterminacy  of
the higher Massey 
product, cf.~\cite[Proposition~2.3]{Matrix}, which for strictly
defined ones acquires a relatively simple
form~\cite[Proposition~2.4]{Matrix}. This raises a provocative
question under what assumptions these estimates 
are sharp, that is, when an analog of
Proposition~\ref{Nejde mne internet, takze se nemusim trapit tim, jak
  je dnes krasne.} holds.

\section{Higher Massey products}
\label{Zemrel Jiri Laufer co mi pripominal Jirku Chramostu.}

\lettrine{\color{red} I}{n}
the first part of this section we recall the definition of
higher-order Massey products, as well as some properties of the
connecting \Ainfty-morphism~\eqref{Vaham jestli mam jet ale asi
  nepojedu.} that follow from the axioms. 
The second part contains the main technical result of this note.
As before, $A = (A,\di,\cdot)$ will be a fixed
dg associative algebra with the cohomology algebra $H =
(H,0,\cdot\,)$.

\vskip .5em
\noindent 
{\bf Recollections.}
We start by recalling the standard definition of Massey products,
following~\cite[page~6]{BMM} closely. 
Let $\Rada x1n \in H$ be cohomology classes. We call the scheme  
\[
D = \big\{\aa uv \in A \ \big| \ |\aa uv| = 
\textstyle\sum_{r=u}^v (|x_r|-1) +1,\
1 \leq u \leq v \leq n\big\}
\] 
a {\em defining system\/} for the Massey product $\<\Rada
x1n\>$ if  $\aa uu$, $u = \rada 1n$, is a cocycle representing~$x_u$
and, for each $1\leq u \leq v \leq n$, $v-u \leq n-2$,
\begin{subequations}
\begin{equation}
\label{Vcera jsem se vratil z Mercina.}
\di \aa uv = \sum_{r=u}^{v-1} \oaa ur \cdot \aa {r+1}v.
\end{equation}
Each defining system determines the cocycle
\begin{equation}
\label{prvni letosni kilometry na kole}
c(D) := \sum_{r=1}^{n-1} \oaa ur \cdot \aa {r+1}v \in 
A^{x_1 + \cdots + x_n -n+2}.
\end{equation}
\end{subequations}
The subset of cohomology classes
\[
\< \Rada x1n\> : =\big\{\, [c(D)] \ \big|  \hbox {
  $D$ is a defining system for $\Rada a1n$ }\big\} \subset H^{x_1 + \cdots + x_n -n+2}
\]
is called the $n$-fold {\em Massey product\/} of $\Rada x1n$.  
We put  $\< \Rada x1n\> : =
\emptyset$ if there is no defining system for $\Rada x1n$.

\begin{example}
The defining system for the triple Massey product $\<x,y,z\>$ 
was explicitly written
out in~\eqref{Udrzim se dnes?}--\eqref{Dnes jsme nakupovali jako blaznivi.}.
The defining system for the $4$-fold
Massey product $\<x_1,x_2,x_3,x_4\>$ is the scheme
\[
\begin{gathered}
x_1 = [\aa11],\ x_2 = [\aa22],\ x_3 = [\aa33],\ x_4 = [\aa44]
\\
\di \aa12 = \oaa11\aa22,\ \di \aa23 = \oaa22\aa33,\ \di \aa34 =
    \oaa33\aa44,
\\
\di \aa13 = \oaa11 \aa23 + \oaa12\aa33,\ \di \aa24 = \oaa22 \aa34 +
    \oaa23\aa44.
\end{gathered}
\]
The corresponding element of the Massey product $\<x_1,x_2,x_3,x_4\>$  
is the cohomology class of
\[
c(D)  = \oaa11\aa24 + \oaa12\aa34 + \oaa13\aa44
\in A^{x_1+x_2+x_3+x_4 -2}\ .
\]
\end{example}

\begin{remark}
An amazing trick is to organize a defining system $D$ for $\<\Rada
x1n\>$ 
to the matrix
\[
\bfD:= 
\begin{pmatrix}
0 & a_{1,1}  & a_{1,2} & \cdots  & a_{1,n-1} &0
\\
0 & 0  & a_{2,2} & \cdots  & a_{2,n-1} &a_{2,n}
\\
\vdots & \vdots & \vdots & \cdots  & \vdots & \vdots
\\
0 & 0  & 0 & \cdots  & a_{n-1,n-1} &a_{n-1,n}
\\
0 & 0  & 0 & \cdots  &0  &a_{n,n}
\\
0 & 0  & 0 & \cdots  &0  &0
\end{pmatrix}.
\]
Equations~(\ref{Vcera jsem se vratil z Mercina.})--(\ref{prvni
  letosni kilometry na kole}) then assume a concise form of a single equation
\[
c(\bfD) = - d\,  \bfD + \overline{\bfD} \cdot \bfD
\]
with $c(\bfD)$ the \hbox{$(n\!+\!1)\! \times\! (n\!+\!1)$} matrix whose only nontrivial entry
is $c(D)$ at the upper right corner, cf.~\cite[page~538]{Matrix}. 
In this disguise $c(\bfD)$
appears as the noncommutative curvature of the connection $\bfD$.
\end{remark}

We also need to recall, following~\cite[Section~2]{tr}, the properties
that the components of a connecting
\Ainfty-morphism $\psi_\infty : (H,0,\bfmu) \to (A,d,\cdot)$ 
in~\eqref{Vaham jestli mam jet ale asi nepojedu.}
should satisfy. The linear part $\psi: H \to A$ must be a chain map,
which in this case means that $\di\psi(h)
= 0$ for $h \in H$. For  $h_1,h_2 \in H$  we require 
\begin{equation}
\label{Petr byl podobny zoufalec jako ja.}
 \di \psi_2(h_1,h_2) =  - \psi(h_1)\cdot\psi(h_2)  +
                       \psi\mu_2(h_1,h_2),
\end{equation}
and for  $h_1,h_2,h_3 \in H$
\begin{align}
\label{Prohlidka uz ve ctvrtek.}
\di \psi_3(h_1,h_2,h_3) =&
-\psi_2(h_1,h_2)\cdot\psi(h_3) + 
 \znamenko {h_1} \psi(h_1)\cdot\psi_2(h_2,h_3)
\\
\nonumber 
&-\psi_2\big(\mu_2(h_1,h_2),h_3\big) + \psi_2\big(h_1,\mu_2(h_2,h_3)\big)  
+ \psi\mu_3(h_1,h_2,h_3).
\end{align}
For $\Rada h1n \in H$, $n \geq 4$, we demand
\begin{align}
\label{Zapomel jsem si kleste.}
 \di\psi_n(&\Rada h1n) = 
 - \znamenko{n(1+h_1)}  \ps(h_1) \cdot \psi_{n-1}(\Rada h2n)
-  \psi_{n-1}(\Rada h1{n-1}) \cdot \ps(h_n)
\\
\nonumber
&-  {\sum}_A
\znamenko{(j+1)(i+h_1+\cdots + h_i)}\psi_i(\Rada h1i)\cdot \psi_j(\Rada h{i+1}n)
\\
\nonumber
& - {\sum}_B 
\znamenko{i +n+ k(h_1 + \cdots + h_{i-1} + i)}
\psi_l\big(\Rada h1{i-1},\mu_k(\Rada hi{i+k-1}),\Rada h{i+k}n\big) 
\\ \nonumber 
&+\psi \mu_n(\Rada h1n),
\end{align}
where $A := \{i,j \geq 2 \ | \ i+j = n \}$ and $B := 
\{ l,k  \geq 2,\ i \geq 1 \ | \ l+k = n +1, \ i+k\leq n \}$.

\vskip .5em
\noindent 
{\bf The results.}
Let us fix, throughout the rest of this section,
an ordered list of cohomology classes 
$\Rada x1n \in H(A,\di)$, $n \geq 1$. 
We will very often need the sign
\begin{equation}
\label{pisecna boure}
\varepsilon = \varepsilon(\Rada x1n) : = (-1)^{
\frac{(n+2)(n+1)}2 + \sum_{i=1}^n (n+i)x_i
} \in \{-1,+1\} .
\end{equation}
It differs from the sign in~\cite[Theorem~A]{BMM} by the overall
factor $(-1)^{\frac{(n+2)(n+1)}2}$   due to the
different convention for \Ainfty-algebras used here. 

By a {\em subinterval\/} of $\Rada x1n$ of
length $k \geq 2$ we mean a sequence $\Rada xuv$
of $k$ consecutive entries of $\Rada x1n$. The canonical products
mentioned below are the operations introduced in Definition~\ref{Zatahne
  se jim to?}. Having said that, consider the following three situations.
\begin{itemize}
\item [(i)]
Massey products satisfy
$\<x_u,\ldots,x_v\> = \{0\}$  
for each subinterval $x_u,\ldots,x_v$  of $\Rada x1n$ of length $\leq n-1$,
\item[(ii)]
every \isot\  product $\mu_k$ vanishes at 
each subinterval $x_u,\ldots,x_v$ of $\Rada x1n$  of length $k \leq n-1$ and, finally,
\item[(iii)]
$
\big\{\ \varepsilon\cdot \mu_n(\Rada x1n) \ \big| \ \mu_n \hbox { \isot}\ \big\} \subset \<\Rada
x1n\>$,
where $\varepsilon$ is the sign in~(\ref{pisecna boure}).
\end{itemize}

The necessary condition for the
Massey product  $\<\Rada
x1n\>$ to exist is that 
$0 \in \<x_u,\ldots,x_v\>$ for each subinterval $\Rada xuv$ of $\Rada x1n$
of length $\leq n-1$. However, this condition need not be
sufficient, cf.~\cite{4fold} for a careful analysis of the
$n=4$ case.  On the other hand, the existence of  $\<\Rada
x1n\>$ is implied by the conditions formulated in (i).
 In this case we say,
following May~\cite{Matrix}, 
that the $n$-fold Massey product $ \<x_1,\ldots,x_n\>$ is {\em
  strictly} defined. The following theorem and Corollary~\ref{Zaletam si v patek s letadylkem?} are the
main technical achievements of this note.

\begin{theorem}
\label{Zitra jdu s Jarkou na CEZ.}
$\hbox{(i)} \Longrightarrow \hbox{(ii)} \Longrightarrow \hbox{(iii)}$.
\end{theorem}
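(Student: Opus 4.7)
Both implications hinge on the anticipated content of Lemma~\ref{Krtecek mi pomaha.}: if every canonical product $\mu_j$ with $j<k$ vanishes on each subinterval of $\Rada x1n$ of length $j$, then the components $\psi,\psi_2,\ldots,\psi_k$ of the connecting \Ainfty-morphism~\eqref{Vaham jestli mam jet ale asi nepojedu.}, evaluated on the relevant subintervals, assemble via an explicit sign prescription into a defining system $D_\psi$ for $\langle\Rada xuv\rangle$ with $[c(D_\psi)] = \varepsilon\cdot\mu_k(\Rada xuv)$. This generalizes the ternary formulas~\eqref{Pak si tam jeste nalepim delfinka.}--\eqref{Uz skaci prvni kumuly a ja jsem v Praze!} to arbitrary arity, with the sign~$\varepsilon$ of~\eqref{pisecna boure}.

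For the implication $\hbox{(i)}\Rightarrow\hbox{(ii)}$ I would proceed by strong induction on the subinterval length $k$, $2\leq k\leq n-1$. The base case $k=2$ is immediate: the canonical binary product of any \cmm\ is the induced cup product on $H$, and (i) in arity $2$ forces $x_u\cdot x_{u+1}=0$ for each consecutive pair. For the inductive step, fix a subinterval $\Rada xuv$ of length $k$ and assume $\mu_j$ vanishes on all its subintervals of length $j<k$. Lemma~\ref{Krtecek mi pomaha.} then produces a defining system $D_\psi$ whose associated cohomology class equals $\varepsilon\cdot\mu_k(\Rada xuv)$. Hypothesis~(i) gives $\langle\Rada xuv\rangle=\{0\}$, so this class vanishes; since $\varepsilon=\pm1$, we conclude $\mu_k(\Rada xuv)=0$, which is~(ii) in length~$k$.

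The implication $\hbox{(ii)}\Rightarrow\hbox{(iii)}$ is a single application of the same lemma to the full sequence. Indeed, (ii) is precisely the hypothesis required by Lemma~\ref{Krtecek mi pomaha.} at length~$n$: every canonical product of arity strictly less than $n$ vanishes on every proper subinterval of $\Rada x1n$. Hence for each canonical $\mu_n$, with its own connecting \Ainfty-morphism $\psi_\infty$, the lemma provides a defining system $D_\psi$ for $\langle\Rada x1n\rangle$ satisfying $[c(D_\psi)]=\varepsilon\cdot\mu_n(\Rada x1n)$, and by the very definition of a higher Massey product this places $\varepsilon\cdot\mu_n(\Rada x1n)$ in $\langle\Rada x1n\rangle$, as required.

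The chief technical obstacle lies entirely inside Lemma~\ref{Krtecek mi pomaha.} rather than in the present theorem: one must verify that after the hypothetical vanishing is imposed on all lower $\mu_j$, the \Ainfty-morphism axioms~\eqref{Petr byl podobny zoufalec jako ja.}, \eqref{Prohlidka uz ve ctvrtek.}, \eqref{Zapomel jsem si kleste.} collapse onto the defining system relation~\eqref{Vcera jsem se vratil z Mercina.}, and that the top axiom delivers the curvature identity $\di\psi_n(\Rada x1n)=\varepsilon\cdot c(D_\psi)-\psi\mu_n(\Rada x1n)$ with exactly the sign of~\eqref{pisecna boure}. This is a signs-and-indices computation matching the two incompatible conventions described in the introduction; once it is granted, Theorem~\ref{Zitra jdu s Jarkou na CEZ.} reduces to the short inductive argument above.
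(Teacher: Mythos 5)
Your proposal is correct and follows essentially the same route as the paper: both implications are reduced to Lemma~\ref{Krtecek mi pomaha.}, with $\hbox{(i)}\Rightarrow\hbox{(ii)}$ proved by induction (base case the binary/cup-product identity, inductive step combining $\varepsilon\cdot\mu_k(\Rada xuv)\in\<\Rada xuv\>=\{0\}$ with $\varepsilon=\pm1$), and $\hbox{(ii)}\Rightarrow\hbox{(iii)}$ obtained as a direct application of the lemma at arity $n$. Your induction on subinterval length rather than on $n$ is only a cosmetic reorganization of the paper's argument.
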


The proof is postponed towards the end of this section. Let us formulate
also the following

\begin{proposition}[after~\cite{BMM}]
\label{Funguje uz laborator?}
Assume that, for each $1 \leq k \leq n-1$, the tensor products
\[
x_u \ot \cdots \ot x_v  \in H^{\otimes k}(A,d),\ v-u+1 = k,\
1 \leq u \leq v \leq n,
\]
are linearly independent.
Then, given  $x \in \<\Rada x1n\>$, there exists a \isot\ product
$\mu_n$ such that $x =\varepsilon \cdot \mu_n(\Rada x1n)$. 
Therefore
\[
\big\{\ \varepsilon \cdot \mu_n(\Rada x1n) \ \big| \ \mu_n \hbox { is \isot}\
\big\} \supset \<\Rada x1n\>.
\]
\end{proposition}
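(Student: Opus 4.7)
The strategy is to invert the correspondence of Lemma~\ref{Krtecek mi pomaha.}: given $x \in \<\Rada x1n\>$ realized by a defining system $D = \{a_{u,v}\}$ with $[c(D)] = x$, I will produce a canonical minimal model $\psi_\infty$ of $A$ whose associated defining system, in the sense of that lemma, is precisely $D$. The identity $\varepsilon \cdot \mu_n(\Rada x1n) = [c(D_{\psi_\infty})]$ supplied by the lemma then immediately yields $\varepsilon \cdot \mu_n(\Rada x1n) = x$.

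The construction of $\psi_\infty$ proceeds by induction on the arity at which components are arranged. Define $\psi:H\to A$ by $\psi(x_i) := a_{i,i}$ for $i = 1, \ldots, n$, which is consistent by the $k=1$ instance of the linear independence hypothesis, and extend to arbitrary representatives of cohomology classes on a chosen complement of $\mathrm{span}\{\Rada x1n\} \subseteq H$; the strong minimal model theorem (Theorem~\ref{Pujdeme s Jarkou k Mechacum.}) lifts this $\psi$ to a canonical minimal model $\psi^{(1)}_\infty$. Assume inductively that $2 \leq k \leq n-1$ and that $\psi^{(k-1)}_\infty$ has been arranged so that $\psi^{(k-1)}_j(x_u,\ldots,x_v)$ equals the sign-decorated $a_{u,v}$ dictated by Lemma~\ref{Krtecek mi pomaha.} on every subinterval of length $j \leq k-1$. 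Replace $\psi^{(k-1)}_\infty$ by its composition with an isotopy $\tau_\infty : (H,0,\bfmu^{(k)}) \to (H,0,\bfmu^{(k-1)})$ satisfying $\tau=\id_H$, $\tau_j=0$ for $1<j<k$, and $\tau_k$ to be chosen. The $A_\infty$-composition formula reduces in this case, up to the fixed sign convention, to
$(\psi^{(k-1)}\circ\tau_\infty)_k(h_1,\ldots,h_k) = \psi^{(k-1)}_k(h_1,\ldots,h_k)+\psi(\tau_k(h_1,\ldots,h_k))$,
so $\psi^{(k-1)}_k(x_u,\ldots,x_v)$ may be shifted by any element of the image of $\psi$. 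Take $\tau_k(x_u,\ldots,x_v)$ to be the cohomology class realizing the discrepancy between the required $\pm a_{u,v}$ and $\psi^{(k-1)}_k(x_u,\ldots,x_v)$; the linear independence of the length-$k$ tensors $x_u \otimes \cdots \otimes x_v$ makes these prescriptions mutually compatible, and $\tau_k$ is extended arbitrarily to the rest of $H^{\otimes k}$.

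The main obstacle, and the step where the linear independence hypothesis does its essential work, is verifying that the required cohomology class exists, i.e., that the discrepancy is a cocycle. The $A_\infty$-morphism axiom~\eqref{Zapomel jsem si kleste.} in arity $k$ applied to $(x_u,\ldots,x_v)$ expresses $d\psi^{(k-1)}_k(x_u,\ldots,x_v)$ as a polynomial in lower $\psi^{(k-1)}_j$'s and $\mu^{(k-1)}_j$'s; by the inductive hypothesis the $\psi$-terms coincide with the corresponding expressions on the right-hand side of the defining system equation $d\,a_{u,v}=\sum_{r=u}^{v-1}\overline{a}_{u,r}\cdot a_{r+1,v}$, and the remaining $\Sigma_B$-contributions involving the lower $\mu^{(k-1)}_j$'s are absorbed by exploiting the inductive freedom to extend $\tau_j$ off the distinguished subinterval tensors, again using linear independence. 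The two differentials therefore agree, the discrepancy is a cocycle, and $\tau_k$ can be prescribed as required. That the resulting $\tau_\infty$ is a bona fide isotopy with some adjusted $\bfmu^{(k)}$ then follows from Item~(ii) of~\cite[Proposition~6]{bifib}, the same bifibrancy result invoked in the uniqueness half of the strong minimal model theorem. After $n-2$ iterations the final $\psi^{(n-1)}_\infty$ realizes $D$ exactly, and applying Lemma~\ref{Krtecek mi pomaha.} in arity $n$ yields $\varepsilon\cdot\mu^{(n-1)}_n(\Rada x1n)=[c(D)]=x$, completing the proof.
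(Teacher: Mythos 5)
Your overall strategy --- build a \cmm\ whose connecting morphism reproduces the given defining system $D$ and then quote Lemma~\ref{Krtecek mi pomaha.} --- is the right one, and it is essentially what the paper does by deferring to the inductive construction of \cite[Theorem~2.1(i)]{BMM}. But your implementation has a genuine gap at the step ``take $\tau_k(x_u,\ldots,x_v)$ to be the cohomology class realizing the discrepancy.'' Post-composition with an isotopy of the form $(\id_H,0,\ldots,0,\tau_k,\ldots)$ changes $\psi^{(k-1)}_k$ by $\psi\circ\tau_k$, whose values lie in the subspace ${\rm im}(\psi)$ of chosen representatives; since $\psi$ is injective, ${\rm im}(\psi)$ meets the coboundaries only in $0$. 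The discrepancy $\delta_{u,v}$ between the required $\pm a_{u,v}$ and $\psi^{(k-1)}_k(x_u,\ldots,x_v)$ is indeed a cocycle, but there is no reason for it to lie in ${\rm im}(\psi)$: in general $\delta_{u,v}=\psi([\delta_{u,v}])+\di b$ with $\di b\neq 0$, and your choice of $\tau_k$ removes only the first summand. So after your $n-2$ iterations the connecting morphism realizes not $D$ but a defining system whose off-diagonal entries differ from the $a_{u,v}$ by coboundaries. For $n=3$ this is harmless, but for $n\geq 4$ perturbing the entries of a defining system by coboundaries changes $[c(D)]$ in general --- that is precisely the indeterminacy of higher Massey products --- so your argument only yields $\varepsilon\cdot\mu_n(\Rada x1n)\in\<\Rada x1n\>$, i.e.\ statement~(iii) of Theorem~\ref{Zitra jdu s Jarkou na CEZ.}, not the required equality with the prescribed $x$.

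To close the gap you must also access the coboundary directions: besides the isotopy $\tau_k$ you need to perturb $\psi_k$ by a coboundary-valued map (equivalently, compose with an \Ainfty-homotopy), which leaves $\bfmu$ and the lower components untouched while adjusting the higher $\psi_j$; together the two freedoms exhaust all cocycle-valued corrections and let you hit $\pm a_{u,v}$ on the nose. Alternatively --- and this is the route the paper takes via \cite{BMM} --- skip the detour through an ambient model altogether and construct $(\bfmu,\psi_\infty)$ directly by induction on arity, \emph{defining} $\psi_{v-u+1}(\Rada xuv):=\pm a_{u,v}$ and using the linear independence hypothesis only to extend these finitely many prescriptions to linear maps on $H^{\ot k}$; the defining-system equations then verify the \Ainfty-morphism axioms on the distinguished tensors. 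Your appeal to ``extending $\tau_j$ off the distinguished tensors to absorb the $\Sigma_B$-contributions'' is also not what is needed: once the lower $\psi_j$ match $D$ exactly, the axioms force $\mu_j(\Rada xuv)=0$ on proper subintervals, and it is this vanishing that kills those terms.
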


\begin{proof}
It is straightforward to verify that, under the linear independence
assumption, the inductive construction described in 
the proof of~\cite[Theorem~2.1(i)]{BMM} works, 
and the emerging \Ainfty-structure  is \isot, cf.~also the
corrigendum~\cite{errata}.
\end{proof}

\begin{corollary}
\label{Zaletam si v patek s letadylkem?}
Assume that the $n$-fold Massey product
$\<x_1,\ldots,x_n\>$  
is strictly defined and the assumption of Proposition~\ref{Funguje uz
  laborator?} is fulfilled. Then
\[
\big\{\ \varepsilon \cdot  \mu_n(\Rada x1n) \ \big| \ \mu_n \hbox { is \isot }\ \big\} = \<\Rada
x1n\>.
\]
\end{corollary}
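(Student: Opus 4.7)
The plan is very short, because this statement is essentially a packaging of two results already on the table: the second implication of Theorem~\ref{Zitra jdu s Jarkou na CEZ.} furnishes one inclusion, and Proposition~\ref{Funguje uz laborator?} furnishes the other. I would organize the proof as two observations followed by a one-line conclusion.

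First I would unpack the hypothesis. The assumption that $\<\Rada x1n\>$ is \emph{strictly} defined means, by May's definition recalled just before Theorem~\ref{Zitra jdu s Jarkou na CEZ.}, that $\<x_u,\ldots,x_v\> = \{0\}$ for every proper subinterval $\Rada xuv$ of $\Rada x1n$. This is exactly condition~(i) of Theorem~\ref{Zitra jdu s Jarkou na CEZ.}. Applying the implication $\text{(i)}\Rightarrow\text{(iii)}$ of that theorem immediately gives the inclusion
\[
\bigl\{\, \varepsilon\cdot\mu_n(\Rada x1n) \ \big|\ \mu_n \text{ is \isot}\,\bigr\} \ \subset\ \<\Rada x1n\>.
\]

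Next I would invoke Proposition~\ref{Funguje uz laborator?} for the reverse inclusion. The linear-independence hypothesis on the tensor products $x_u\otimes\cdots\otimes x_v$ is precisely the hypothesis of that proposition, so it supplies the opposite containment
\[
\bigl\{\, \varepsilon\cdot\mu_n(\Rada x1n) \ \big|\ \mu_n \text{ is \isot}\,\bigr\} \ \supset\ \<\Rada x1n\>.
\]
Combining the two inclusions yields the claimed equality.

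Since both ingredients are already proved (or scheduled to be proved) earlier, there is no genuine obstacle; the only thing to verify carefully is that the term \emph{strictly defined} as used in the corollary is literally what condition~(i) asks for, so that Theorem~\ref{Zitra jdu s Jarkou na CEZ.} applies without further work. If anything deserves a remark, it is that the two hypotheses of the corollary play disjoint roles—the strictness controls the $\subset$ direction via the existence of a compatible \cmm, while the linear independence controls the $\supset$ direction by allowing an arbitrary defining system to be promoted to a canonical \Ainfty-structure along the lines of~\cite{BMM}.
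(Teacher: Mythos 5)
Your proposal is correct and is exactly the paper's argument: the author's proof reads ``an obvious combination of Theorem~\ref{Zitra jdu s Jarkou na CEZ.} and Proposition~\ref{Funguje uz laborator?}'', with the implication $\hbox{(i)}\Rightarrow\hbox{(iii)}$ supplying one inclusion and the proposition the other. Your unpacking of ``strictly defined'' as condition (i) and your remark on the disjoint roles of the two hypotheses are accurate elaborations of the same one-line proof.
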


\begin{proof}
An obvious combination of Theorem~\ref{Zitra jdu s Jarkou na CEZ.} and
Proposition~\ref{Funguje uz laborator?}.
\end{proof}

In words, under the assumptions of the corollary,
the set  $\<\Rada
x1n\>$ is exhausted by all possible values of $\varepsilon\cdot\mu_n(\Rada
x1n) = \mu_n(\rada {\varepsilon\cdot x_1}{x_n})$ with
\isot\ $\mu_n$. 
Put another way, the information given by $\<\Rada
x1n\>$ is the same as the information provided by the values
$\mu_n(\rada {\varepsilon\cdot x_1}{x_n})$ with $\mu_n$ running over all 
\isot\ products. The following result is the core of our proof of 
Theorem~\ref{Zitra jdu s Jarkou na CEZ.}.

\begin{lemma}[the bootstrap] 
\label{Krtecek mi pomaha.}
Let $(H,0,\bfmu) = (H,0,\mu_2,\mu_3,\ldots)$ be a canonical
minimal model
as in~(\ref{Zatahne se jim to?}). Suppose that each operation $\mu_k$ of
this structure with $k
\leq n-1$ satisfies
\begin{equation}
\label{Tech veci co uz jsem postracel!}
\mu_k( x_u,\ldots,x_v)= 0
\end{equation} 
for every subinterval $x_u,\ldots,x_v$ of $\Rada x1n$ of length $k$. Then
\begin{align}
\nonumber 
\aa uu & := \psi(x_u),
\\
\nonumber 
\aa u{u+1}& := -\znamenko{x_u} \psi_2(x_u,x_{u+1}),
\\
\nonumber 
\aa u{u+2}& := - \znamenko{x_{u+1}} \psi_3(x_u,x_{u+1},x_{u+2}),
\\
\nonumber 
\aa u{u+3}& := \znamenko{x_u+x_{u+2}} \psi_4(x_u,x_{u+1},x_{u+2},x_{u+3}),
\\
\nonumber 
& \hskip .85em  \vdots
\\
\label{Pujdu vytahnout baterku z nabijecky.}
\aa uv & := \znamenko{
\frac {(v-u)(v-u+1)}2 + \sum_{i=0}^{v-u} (u+v+i)x_{u+i}
}
\psi_{v-u+1}(\Rada xuv),\ v-u \geq 1,
\end{align}
is a defining system for the $n$-fold Massey product $\<\Rada x1n\>$, and 
\begin{equation}
\label{A Laurinka mi pomaha take.}
\varepsilon\cdot
\mu_n(\Rada x1n) \in \<\Rada x1n\>.
\end{equation}
\end{lemma}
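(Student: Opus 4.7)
} The plan is to read off both the defining-system equations \eqref{Vcera jsem se vratil z Mercina.} and the representative cocycle \eqref{prvni letosni kilometry na kole} directly from the $A_\infty$-morphism axioms \eqref{Petr byl podobny zoufalec jako ja.}--\eqref{Zapomel jsem si kleste.}, exploiting the vanishing hypothesis \eqref{Tech veci co uz jsem postracel!} to annihilate all the terms that do not fit the Massey pattern. As a preliminary, I first check that the degree of the proposed $\aa uv$ is correct: in the cohomological convention of~\cite{tr} the Taylor coefficient $\psi_{v-u+1}$ has degree $1-(v-u+1)=-(v-u)$, so
\[
|\aa uv| = \textstyle\sum_{r=u}^{v}|x_r| - (v-u) = \sum_{r=u}^{v}(|x_r|-1)+1,
\]
which matches the degree required of a member of a defining system for $\langle\Rada x1n\rangle$.

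Next I verify equation \eqref{Vcera jsem se vratil z Mercina.} for every $u \leq v$ with $v-u \leq n-2$. Apply the $A_\infty$-morphism axiom \eqref{Zapomel jsem si kleste.} (and its low-arity specialisations \eqref{Petr byl podobny zoufalec jako ja.}, \eqref{Prohlidka uz ve ctvrtek.}) to the subsequence $(\Rada xuv)$, of length $v-u+1 \leq n-1$. Every term $\psi_l(\ldots,\mu_k(\Rada h{i}{i+k-1}),\ldots)$ appearing on the right-hand side has the inner cluster $(\Rada xi{i+k-1})$ a subinterval of $(\Rada xuv)$ of length $k \leq v-u \leq n-1$, hence vanishes by \eqref{Tech veci co uz jsem postracel!}; the final term $\psi\mu_{v-u+1}(\Rada xuv)$ vanishes for the same reason. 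What survives is exactly the bilinear cluster $\sum \pm \psi_i(\Rada x{u}{u+i-1})\cdot \psi_j(\Rada x{u+i}v)$. A purely mechanical check then matches this against $\sum_{r=u}^{v-1}\oaa ur \aa{r+1}v$: the binomial portion $\frac{(v-u)(v-u+1)}{2}$ of the sign in \eqref{Pujdu vytahnout baterku z nabijecky.} absorbs the $A_\infty$-sign convention dichotomy discussed in the introduction, while the linear portion $\sum (u+v+i)x_{u+i}$ produces the Koszul signs required to turn the ordinary products $\psi_i\cdot\psi_j$ into the $\overline{\aa ur}\cdot \aa{r+1}v$ of the Massey convention. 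This verifies that the prescribed scheme is a genuine defining system.

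For the assertion \eqref{A Laurinka mi pomaha take.}, apply the same axiom \eqref{Zapomel jsem si kleste.} one arity higher, with the full list $(\Rada x1n)$. Now the vanishing hypothesis still eliminates every $\psi_l(\ldots,\mu_k(\ldots),\ldots)$-term (all inner clusters are strict subintervals of length $\leq n-1$), but the term $\psi\mu_n(\Rada x1n)$ need no longer vanish. What remains is
\[
d\psi_n(\Rada x1n)\;=\;\sum_{i+j=n,\;i,j\geq 1}\!\!\pm\,\psi_i(\Rada x1i)\cdot \psi_j(\Rada x{i+1}n)\;+\;\psi\mu_n(\Rada x1n),
\]
with the boundary terms interpreted as the $i=1,j=n-1$ and $i=n-1,j=1$ extremes. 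Rewriting each product on the right in terms of $\oaa 1r \aa{r+1}n$ via the same sign substitution as above identifies the sum with $c(D)$ up to the global factor; a direct accounting of $\frac{(n-1)n}{2}$ from the $A_\infty$-convention together with the shifts $(u+v+i)x_{u+i}$ collapses to precisely $\varepsilon$ as defined in~\eqref{pisecna boure}. Passing to cohomology and using $[\psi\mu_n(\Rada x1n)] = \mu_n(\Rada x1n)$ yields $[c(D)] = \varepsilon\cdot \mu_n(\Rada x1n)$, proving \eqref{A Laurinka mi pomaha take.}.

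The only genuine obstacle is sign bookkeeping: one must carefully convert between the $A_\infty$-conventions of \cite[p.~141]{tr} and the Massey convention built on the reversal operator $a\mapsto\overline a$, and then show that the accumulated sign matches the closed form in \eqref{Pujdu vytahnout baterku z nabijecky.} and the global factor~$\varepsilon$. Once this one-time calculation is done carefully for low values of $v-u$ (say up to $v-u=3$, which pins down both the $\tfrac{(v-u)(v-u+1)}{2}$ quadratic part and the $(u+v+i)x_{u+i}$ linear part), an induction on $v-u$ propagates the formula to general arity without further surprises.
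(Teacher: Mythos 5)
Your proposal is correct and follows essentially the same route as the paper: apply the $A_\infty$-morphism axioms \eqref{Petr byl podobny zoufalec jako ja.}--\eqref{Zapomel jsem si kleste.} to subintervals, use the vanishing hypothesis \eqref{Tech veci co uz jsem postracel!} to kill all $\mu_k$-terms, identify the surviving bilinear cluster with the Massey defining-system equations after the sign substitution \eqref{Pujdu vytahnout baterku z nabijecky.}, and obtain $\psi\mu_n(\Rada x1n) = \di\psi_n(\Rada x1n) + \varepsilon\cdot c(D)$ before passing to cohomology. The paper packages this as an induction on $n$ with an explicit $n=3$ base case and likewise defers the sign bookkeeping to a ``painstaking calculation,'' so the difference is purely organizational.
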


\begin{proof}
We proceed by induction. The assumptions for $n=3$ are $\mu_2(x_1,x_2)
=  \mu_2(x_2,x_3) = 0$, so~(\ref{Petr byl podobny zoufalec jako ja.}) 
gives
\begin{equation}
\label{Ruldu jsem tri roky nevidel.}
\di \psi_2(x_1,x_2)  = -  \psi(x_1)\psi(x_2) \ \hbox { and } \
\di \psi_2(x_2,x_3)  =  - \psi(x_2)\psi(x_3).
\end{equation}
We claim that then
\begin{equation}
\label{Za chvili na CEZ}
\hbox{$\aa uu := \psi(x_u)$, $u = 1,2,3$, $a_{1,2} : =
-\znamenko{x_1}  \psi_2(x_1,x_2)$ \ and \  $a_{2,3} : = -\znamenko{x_2} 
  \psi_2(x_2,x_3)$}
\end{equation} 
is a defining system for $\<x_1,x_2,x_3\>$. 

Indeed, $[\aa uu] = [\psi(x_u)] = x_u$ by~\eqref{Pojedeme s Jarkou do
  OBI.}, so $\aa uu$ is a
representative of $x_u$, while~\eqref{Ruldu jsem tri roky nevidel.} is
translated to $\di \aa12 = \oaa 11 \aa 22$ and   $\di \aa23 = \oaa 22
\aa 33$ as required.
Under the vanishing   $\mu_2(x_1,x_2)
=  \mu_2(x_2,x_3) = 0$, equation~\eqref{Prohlidka uz ve ctvrtek.}
gives
\[
 \di  \psi_3(x_1,x_2,x_3)  = -  \psi_2(x_1,x_2)\psi(x_3) +
 \znamenko{x_1} \psi(x_1)\psi_2(x_2,x_3)
+ \psi\mu_3(x_1,x_2,x_3), 
\]
which is the same as
\[
\psi\mu_3(x_1,x_2,x_3) =  \di  \psi_3(x_1,x_2,x_3) 
- \znamenko{x_1}  \psi(x_1)\psi_2(x_2,x_3) + \psi_2(x_1,x_2)\psi(x_3).
\]
Both sides of the above equation are cocycles so, by~(\ref{Pojedeme s
  Jarkou do OBI.}) again,
\begin{align*}
[\psi\mu_3(x_1,x_2,x_3)] &= \mu_3(x_1,x_2,x_3) =
\big[- \znamenko{x_1}  \psi(x_1)\psi_2(x_2,x_3) +
                     \psi_2(x_1,x_2)\psi(x_3)\big]
\\
&= \big[\znamenko{x_1 + x_2} \aa11 \aa23 - \znamenko {x_1}\aa
     12\aa23\big]
=\znamenko{x_2}\big[\oaa11 \aa23 + \oaa12\aa33\big],
\end{align*}
therefore
\[
\znamenko{x_2} \mu_3(x_1,x_2,x_3) = 
 [\oaa11 \aa23 + \oaa12\aa33].
\]
Since $\varepsilon(x_1,x_2,x_3) = (-1)^{x_2}$, we verified that
$\varepsilon \cdot  \mu_3(x_1,x_2,x_3) \in \<x_1,x_2,x_3\>$ as claimed. 
This finishes the first
step of the induction.

Assume that we have proved the lemma for all arities $\leq n-1$ and
prove it for arity $n$. By assumption, all terms
containing $\mu_k$ with $k \leq n-1$  in~\eqref{Zapomel jsem si
  kleste.} vanish, so we are left with
\begin{align*}
 \di\psi_n(&\Rada x1n) = 
 - \znamenko{n(1+x_1)}  \ps(x_1) \cdot \psi_{n-1}(\Rada x2n)
-  \psi_{n-1}(\Rada x1{n-1}) \cdot \ps(x_n)
\\
\nonumber
&-  {\sum}_A
\znamenko{(j+1)(i+x_1+\cdots + x_i)}\psi_i(\Rada x1i)\cdot \psi_j(\Rada x{i+1}n)
\\ \nonumber 
&+\psi \mu_n(\Rada x1n),
\end{align*}
where $A$ is as in~(\ref{Zapomel jsem si
  kleste.}). Substituting~(\ref{Pujdu vytahnout baterku z nabijecky.})
to the above display, we obtain, after a lengthy but elementary
manipulation with signs,
\begin{equation}
\label{Budu si asi muset ten bowden objednat znovu.}
\psi\mu_n(\Rada x1n) =  \di \psi_n(\Rada x1n) + \varepsilon\cdot c(D),
\end{equation}
where $c(D)$ is the expression in~(\ref{prvni letosni kilometry na
  kole}) for the scheme
\begin{equation}
\label{Ja ten bowden se kterym jsem se tak dlouho delal asi ztratil!}
D = \big\{\aa uv \in A \ \big| \
1 \leq u \leq v \leq n\big\}
\end{equation}
with $\aa uv$ defined in~\eqref{Pujdu vytahnout baterku z nabijecky.}.
It remains to verify that~(\ref{Ja ten bowden se kterym jsem
  se tak dlouho delal asi ztratil!}) is a  defining system for the
Massey product $\<\Rada x1n\>$, that is, to show that it 
satisfies~\eqref{Vcera jsem se vratil z Mercina.}. 
Invoking the vanishing assumption~(\ref{Tech veci co uz
  jsem postracel!}) we conclude that~\eqref{Zapomel jsem si kleste.} 
gives, for each $k \leq n-1$,
\begin{align*}
 \di\psi_k(&\Rada xuv) = 
 - \znamenko{k(1+x_u)}  \ps(x_u) \cdot \psi_{k-1}(\rada{x_{u+1}}{u_v})
-  \psi_{k-1}(\rada {x_u}{x_{v-1}}) \cdot \ps(x_v)
\\
\nonumber
&-  {\sum}_A
\znamenko{(j+1)(i+x_u+\cdots + x_{u+i-1})}\psi_i(\Rada xu{u+i-1})
\cdot \psi_j(\Rada x{u+i}v),
\end{align*}
where  $A := \{i,j \geq 2 \ | \ i+j = k \}$.
A painstaking calculation shows that the substitution~\eqref{Pujdu
  vytahnout baterku z nabijecky.} converts the above display
to~\eqref{Vcera jsem se vratil z Mercina.}.  
The induction step is completed by passing to the cohomology classes at both
sides of~\eqref{Budu si asi muset ten bowden objednat znovu.},
which results in
$[\psi\mu_n(\Rada x1n)] = \mu_n(\Rada x1n) =   \varepsilon\cdot [c(D)]$.
\end{proof}

\begin{proof}[Proof of Theorem~\ref{Zitra jdu s Jarkou na CEZ.}] 
We will prove $\hbox{(i)} \Longrightarrow \hbox{(ii)}$
by induction. For $n=3$, which is the first nontrivial
  case, we need to show that the vanishing of $\<x_1,x_2\>$ and  
$\<x_2,x_3\>$, which are always defined,  
implies the vanishing of $\mu_2(x_1,x_2)$ and
$\mu_2(x_2,x_3)$. This follows from the equalities
\[
\mu_2(x_1,x_2) = \znamenko{x_1} \<x_1,x_2\> \ \hbox { and }
\ \mu_2(x_2,x_3) = \znamenko{x_2} \<x_2,x_3\>,
\]
which are the binary case of~\eqref{A Laurinka mi pomaha take.} and
can be verified directly.

Assume that we have proved $\hbox{(i)} \Longrightarrow \hbox{(ii)}$ 
for some $n \geq 3$ and
prove it for $n+1$. This amounts to proving the vanishing of
$\mu_n(\Rada x1n)$ and $\mu_n(\Rada x2{n+1})$, the vanishing on
shorter subintervals of $\Rada x1{n+1}$ follows from the induction
assumption.
Equation~\eqref{A Laurinka mi pomaha take.} of Lemma~\ref{Krtecek mi
  pomaha.}, combined with the hypothesis that $\<\Rada x1n\> = \{0\}$,
implies that $\mu_n(\Rada x1n)=0$. The equality  $\mu_n(\Rada
x2{n+1}) =0$ can be established similarly. The implication  
$\hbox{(ii)} \Longrightarrow \hbox{(iii)}$ follows from  
Lemma~\ref{Krtecek mi pomaha.} in a straightforward way. 
\end{proof}



\begin{thebibliography}{10}


\bibitem{BMM}
U.\ Buijs, J.M.\ Moreno-Fern\'andez and A.\ Murillo.
$A_\infty$ structures and Massey products.
{\it Mediterr. J.\ Math.} (2020), 17:31.

\bibitem{errata}
U.\ Buijs and J.M.\ Moreno-Fern\'andez.
Corrigendum to ``$A_\infty$ structures and Massey products.''
Available on request from the authors.

\bibitem{DSV}
V.\ Dotsenko, S.\ Shadrin and B.\ Vallette.
De Rham cohomology and homotopy Frobenius manifolds.
{\em J. Eur. Math. Soc.} {\bf 17}(2015), 535--547.


\bibitem{Kocour Felix}
Y.\ F\'elix.
D\'enombrement des types de {$\mathbf
K$-homotopie}, th\'eorie de la d\'eformation.
{\em  Suppl. au Bull. Soc. Math.
France} (3)108(1980/81), ii+49 pages.

\bibitem{HS}
{S. Halperin and J. Stasheff.}
\newblock Obstructions to homotopy equivalences.
\newblock Advances in~Math., {\bf 32}(1979), 233--279.


\bibitem{4fold}
D.C.\ Isaksen. When is a fourfold Massey product defined? 
{\it Proc.\ Amer.\ Math.\ Soc.} (5)143(2015), 2235--2239.


\bibitem{Kadei}
T.V.~Kadeishvili.
\newblock
On the homology theory of fibre spaces.
\newblock {\em Russian Math. Surveys} 35:3 (1980), 231-238.


\bibitem{Keller}
B.\ Keller. Introduction to $A_\infty$-algebras and modules. 
{\it Homology Homotopy Appl.} \textbf{3}(1)(2001), 1--35.

\bibitem{tr}
M.~Markl. Transferring $A_\infty$ (strongly homotopy associative)
structures. {\em Rend. Circ. Mat. Palermo (2) Suppl.} \textbf{79}(2006),
139--151. Available as preprint {\tt arXiv:math/0401007}.

\bibitem{bifib}
M.~Markl. Transfers of $A_\infty$-structures as Grothendieck
bifibrations.  Available as preprint {\tt arXiv:2403.17526}.

\bibitem{sull}
M.~Markl and C.~Rogers.
\newblock Which homotopy algebras come from transfer?
{\em Proc.\ Amer.\ Math.\ Soc.}
{\bf 150}(3)(2022), 975--990.

\bibitem{MSS} 
M.~Markl, S.~Shnider, and J.D.~Stasheff, \emph{Operads in algebra, topology and physics}, Mathematical Surveys and Monographs, vol.~96, American Mathematical Society, Providence, RI, 2002.


\bibitem{Matrix}
J.P.~May. 
\newblock Matric Massey products. {\em J.\ of Algebra} 12: 533--568, 1969.

\bibitem{Derived Muro}
F.~Muro.
\newblock Derived universal Massey products.
{\em Homology Homotopy Appl.} (1)25(2023), 189--218.


\bibitem{pet}
D.\ Petersen. A closer look at Kadeishvili's theorem.
Higher Structures {\bf 4}(2)(2020), 211--221.

\end{thebibliography}
\end{document}